\newcommand{\D}{\mathcal{D}} 
\newcommand{\E}{\mathcal{E}} 
\DeclareMathAlphabet{\mathpzc}{OT1}{pzc}{m}{it}
\newcommand{\F}{F^{u(\cdot)}}
\newcommand{\f}{X}
\newcommand{\Dperp}{\dot\gamma^\perp}
\newcommand{\R}{\mathbb{R}}
\newcommand{\Id}{\operatorname{Id}}
\newcommand{\wt}[1]{\widetilde{#1}}
\newcommand{\wh}[1]{\widehat{#1}}
\newcommand{\vect}{\operatorname{vect}}
\newcommand{\T}{\mathrm{T}}
\newcommand{\lra}{\longrightarrow}
\newcommand{\ra}{\rightarrow}
\newcommand{\eps}{\varepsilon}
\newcommand{\dd}{\operatorname{d}}
\newcommand{\pa}{\partial}
\def\<#1>{\big\langle #1\big\rangle}
\def\(#1){\left( #1\right)}
\numberwithin{equation}{section} 
\theoremstyle{plain} 
\newtheorem{theorem}{Theorem}[section]
\newtheorem{lemma}[theorem]{Lemma}
\theoremstyle{definition}
\newtheorem{definition}[theorem]{Definition}
\theoremstyle{remark}
\title{Why are normal sub-Riemannian extremals locally minimizing?}
\author{Micha\l{} J\'{o}\'{z}wikowski\thanks{Research was supported by the Polish National Science Center under the grant DEC-2012/06/A/ST1/00256.}\\
{\small email: mjozwikowski@gmail.com}\\[0.2cm]
Institute of Mathematics,\\
 Polish Academy of Sciences\\[0.5cm]
 Witold Respondek\thanks{Research was supported by the Polish National Science Center under the grant DEC-2011/02/A/ST1/00208.}\\
{\small email: witold.respondek@insa-rouen.fr}\\[0.2cm]
Normandie Universit\'{e}, France\\
 INSA de Rouen, Laboratoire de Math\'{e}matiques}
\date{\today}
\begin{document}
\maketitle
\begin{abstract}
It is well-known that normal extremals in sub-Riemannian geometry are curves which locally minimize the energy functional. Most proofs of this fact do not make, however, an explicit use of relations between local optimality and the geometry of the problem.
In this paper, we provide a new proof of that classical result, which gives insight into direct geometric reasons of local optimality. Also the relation of the regularity of normal extremals with their optimality becomes apparent in our approach.
\end{abstract}

\paragraph*{Keywords:} sub-Riemannian geometry, normal extremal, local optimality, geodesic

\paragraph*{MSC 2010:} 49K15, 53C17, 58A30

\section{Introduction}\label{sec:intro}	

\paragraph{Motivations.} Our initial motivation to undertake this research comes form our recent study \cite{MJ_WR_contact_pmp}. In that paper we developed a contact geometry approach to optimal control problems. As a particular application we discussed sub-Riemannian (\emph{SR}, in short) geodesic problems and obtained elegant geometric characterizations of both normal and abnormal SR extremals. In particular, \emph{normal SR extremals} (\emph{NSRE}s, in short) can be nicely described in terms of the distribution orthogonal to the actual extremal and the flow related with the optimal control. That result, here formulated as Theorem \ref{thm:normal}, was first obtained in \cite{Alcheikh_Orro_Pelletier_1997}. Clearly, NSREs satisfy only first-order conditions for optimality, yet it is well-known that these suffice for their local optimality. Thus it is natural to ask the following question:
\begin{equation*}
\label{eqn:question}\tag{Q}
\text{How does the geometry of NSREs reflect their local optimality?}
\end{equation*}
In this paper we answer question \eqref{eqn:question} providing a detailed geometric proof of local optimality of NSREs (Theorem \ref{thm:main}). Our original idea, which allowed to relate the geometric characterization of NSREs with their local optimality, is to study homotopies (and the related variations) of SR trajectories. We discuss the details of our approach in the next paragraph.

The standard proofs of the local optimality of NSREs (see, e.g., \cite{Liu_Sussmann_1995, montgomery2006tour}) are deeply rooted in symplectic geometry. Usually one starts with the Hamiltonian description of a NSRE $\gamma$ provided by the Pontryagin Maximum Principle \cite{Pontr_Inn_math_theor_opt_proc_1962}. The basic idea is to construct a solution of a Hamilton-Jacobi equation in a neighborhood of $\gamma$, and then to use this solution to build the so-called \emph{calibration} of $\gamma$, i.e., a closed 1-form which estimates the SR length from below with an equality on $\gamma$ (see Sec. 1.9 of \cite{montgomery2006tour} and for another proof using similar arguments \cite{Rifford_2014}).
To prove the local optimality of the considered NSRE we integrate the calibration over a closed contour containing $\gamma$ and use Stokes theorem. That method is very elegant and powerful although it does not use explicitly the geometric reasons behind the local optimality of NSREs. We believe that the new proof presented in this work, an using homotopies, gives an insight into these reasons. For example, the relation between optimality and regularity for NSREs is apparent in our approach. In our proof we do not use symplectic geometry, working directly with the SR distribution and the SR metric, and use only basic differential-geometric tools. We hope that similar ideas will allow to study the optimality issues for other classes of optimal control problems.

Let us note that another point of view on NSREs can be found in \cite{Bonnard_Caillau_Trelat_2007}, where local optimality of smooth extremals (for a general optimal control problem) is discussed from the perspective of the second-order necessary optimality conditions.

\paragraph{Outline of the proof.} Let us now sketch the main steps of the proof and explain geometric reasons which make NSREs locally optimal.

First observe that since we are interested in local optimality only, without any loos of generality we can reformulate the (local) SR geodesic problem as an optimal control problem on $\R^n$ quadratic in cost and linear in controls.

Naively, to prove that a given NSRE $\gamma_0:[0,T]\ra \R^n$ (corresponding to a control $u(t)$) is optimal, one should show that any other trajectory of the system sharing the same end-points as $\gamma_0$ has energy (length) bigger than that of $\gamma_0$. It is, however, quite difficult to describe SR trajectories with given end-points, so instead let us consider any SR trajectory $\gamma_1:[0,T]\ra \R^n$ (corresponding to a control $u(t)+\Delta u(t)$) for which we assume only that it has the same initial point as $\gamma_0(\cdot)$ and that the energy of $\gamma_1(\cdot)$ is smaller than the energy of $\gamma_0(\cdot)$. Showing that the end-point $\gamma_1(T)$ is necessarily different from $\gamma_0(T)$ will end the proof.

In order to compare these end-points, we extend $\gamma_0(\cdot)$ and $\gamma_1(\cdot)$ to a natural homotopy $\gamma_s:[0,T]\ra \R^n$ with $s\in[0,1]$, simply by considering SR trajectories corresponding to the intermediate controls $u(t)+s\Delta u(t)$ and all sharing the same initial point $\gamma_0(0)$.  Now we shall  concentrate our attention on the variation of the family $\gamma_s(\cdot)$ with respect to the parameter $s$:
$$b_s(t):=\pa_s\gamma_s(t)\ .$$
It is clear that the vector $b_0(T)$ approximates at $\gamma_0(T)$ the curve of end-points $\gamma_s(T)$ joining $\gamma_0(T)$ and $\gamma_1(T)$.

It turns out that it is easy to obtain an analytic formula for $b_0(T)$ (Lemma \ref{lem:b_0}) and that the geometric characterization of NSREs (Theorem \ref{thm:normal}) yields a specific behavior of this vector -- it has to point ``much'' backward with respect to $\dot\gamma_0(T)$ -- see Figure \ref{fig:idea}. In other words, the fact that a given SR trajectory $\gamma_0(\cdot)$ is a NSRE implies that the end-points $\gamma_s(T)$ of the natural homotopy joining it with $\gamma_1(\cdot)$ has to wind backward along $\gamma_0(\cdot)$ (provided that the energy of $\gamma_1(\cdot)$ is not greater than that of $\gamma_0(\cdot)$). If $T$ is sufficiently small, then the curve $\gamma_s(T)$ has ``no time'' to return to $\gamma_0(T)$, enforcing $\gamma_0(T)\neq\gamma_1(T)$ and thus proving the optimality of $\gamma_0(\cdot)$ and, in particular, uniqueness.
\medskip

Clearly, the above geometric understanding has to be backed up with some quantitative estimations to guarantee a rigorous proof.  For this purpose we prove elementary Lemmata \ref{lem:basic_estimate} and \ref{lem:b_0_nsre}, which show that the norm $\|b_0(T)\|$ is of order $\|\Delta u(\cdot)\|^2_{L^2[0,T]}$, and Lemma \ref{lem:estimate_delta_b}, estimating the norm $\|b_s(T)-b_0(T)\|$ by an expression of order $T\|\Delta u(\cdot)\|^2_{L^2[0,T]}$. If $T$ is small enough, these two estimates guarantee that the behavior of $b_0(T)$ resembles the behavior of $\gamma_s(T)$ (which is an integral curve of $b_s(T)$). The proof of Lemma \ref{lem:estimate_delta_b} is also elementary, yet quite long. We basically use standard analytic tools (Gr\"{o}nwall's inequality and the mean value theorem) threating the end-point variation $b_s(T)$ as a solution of an $s$-dependent ODE.
\medskip

From a more abstract perspective we believe that it is instructive to relate our proof with the view of optimal control formulated in \cite{Agrachev_Sachkov_2004}, where one should view controlled trajectories as belonging to the extended configuration space containing both configurations and costs (this point of view is actually a foundation of the Pontryagin Maximum Principle \cite{Pontr_Inn_math_theor_opt_proc_1962}).  Now basically, a trajectory of a control system is optimal if its end-point lies on the boundary of the set of points reachable from a given intital configuration in a given time. For a NSRE $\gamma_0(\cdot)$ we simply prove that points $(\gamma_0(T),e)$, where the energy $e$ is smaller than the energy $e_0$ of $\gamma_0(\cdot)$ do not belong to the set reachable from $(\gamma_0(0),0)$ in time $T$, for $T$ sufficiently small.
\begin{figure}%
\centering
  \def\svgwidth{0.5\textwidth}
 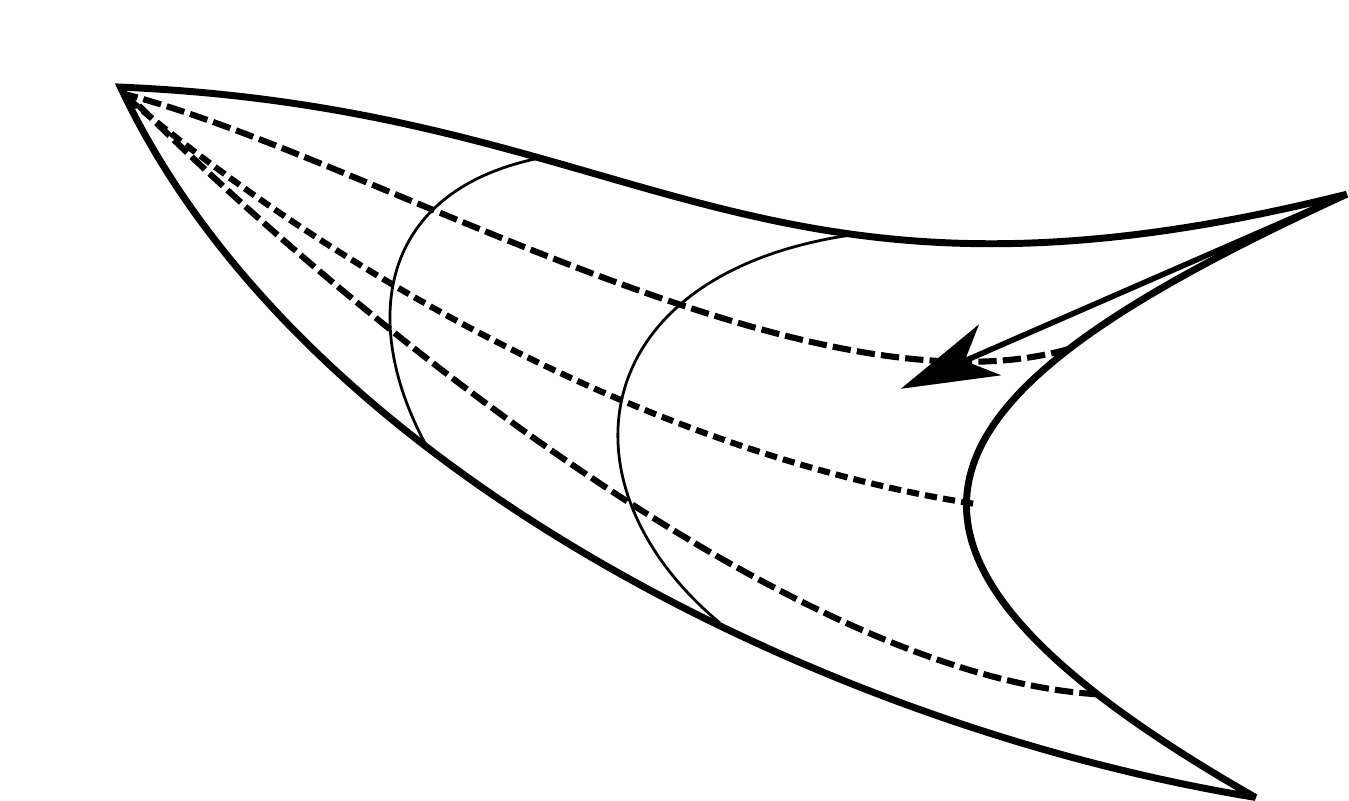
\caption{The geometric reason for local optimality of NSREs: if the cost of $\gamma_1(\cdot)$ is smaller than the cost of $\gamma_0(\cdot)$, then the end-points $\gamma_s(T)$ of the natural homotopy joining the NSRE $\gamma_0(\cdot)$ with $\gamma_1(\cdot)$ move initially backward along $\gamma_0(\cdot)$ (in the direction of vector $b_0(T)$). If $T$ is sufficiently small, points $\gamma_s(T)$ have ``no time'' to go back to $\gamma_0(T)$.}%
\label{fig:idea}%
\end{figure}

\paragraph{Organization of the paper.} In Section \ref{sec:nsre}, we introduce the SR geodesic problem and reformulate its local version in the framework of the optimal control theory. Later we give a geometric characterization of NSREs in Theorem \ref{thm:normal}. We end that part by defining local optimality and stating the main result in Theorem \ref{thm:main}.

In Section \ref{sec:homotopy}, we prove Theorem \ref{thm:main} according to the scheme sketched above. We start with basic estimates related with the energy comparison in Lemma \ref{lem:basic_estimate}. Then we define a natural homotopy between two trajectories of the system and define the related variation. We translate conditions characterizing NSREs to the language of such variations in Lemmata \ref{lem:b_0} and \ref{lem:b_0_nsre}. This, together with natural estimates on the variations from Lemma \ref{lem:estimate_delta_b}, suffices to prove Theorem \ref{thm:main}.

Appendix \ref{app:technical} is devoted to technical and supplementary results. In Lemma \ref{lem:flows} we formulate basic properties of the flows of time-dependent vector fields. Then, by a series of estimates (Lemmata \ref{lem:estimate_delta_q}--\ref{lem:estimate_b}) based on Gr\"{o}nwall's inequality (Lemma \ref{lem:gronwall}), we prove Lemma \ref{lem:estimate_delta_b}

\paragraph{Notation and conventions.} In the paper we use standard notations and conventions of differential geometry and control theory. By $\|y\|_p$ we shall denote the $p$-norm of a vector $y=(y_1,\hdots,y_n)\in \R^n$, i.e., $\|y\|_p:=\left((y_1)^p+\hdots+(y_n)^p\right)^{\frac 1p}$. Given a map $y:[0,T]\ra\R^n$, the expression $\left(\int_0^T\|y(t)\|_p^p\dd t\right)^{\frac 1p}$ gives the $L^p$-norm of $y(\cdot)$, and is denoted by $\|y(\cdot)\|_{L^p[0,T]}$.
	
We say that a curve $\gamma:[0,T]\ra\R^n$ is \emph{absolutely continuous with bounded derivative} (\emph{ACB}, in short) if it is absolutely continuous and its derivative (well-defined a.e.) is essentially bounded. 

\section{Normal sub-Riemannian extremals}\label{sec:nsre}	

\paragraph{Sub-Riemannian geodesic problem.} We shall be working within the following geometric setting. Let $Q$ be a smooth $n$-dimensional manifold and let $\D\subset\T Q$ be a smooth  distribution of rank $k$ on $Q$. By $g:\D\times_Q\D\ra\R$ we shall denote a positively defined symmetric bilinear product on $\D$ (called a \emph{sub-Riemannian metric}). The triple $(Q,\D,g)$ constitutes a \emph{sub-Riemannian} (\emph{SR}, in short) \emph{structure}. Given a pair of points $q_0,q_1\in Q$ and an interval $[0,T]\subset\R$ it is natural to consider the following \emph{SR geodesic problem}:
\begin{equation*}\tag{P}
\label{eqn:sr_geod_prob}
\text{Find an ACB curve $\gamma:[0,T]\ra Q$ satisfying the following conditions:}
\end{equation*}
\begin{enumerate}
	\item\label{cond:1} $\gamma(\cdot)$ is tangent to $\D$, i.e., $\dot \gamma(t)\in \D_{\gamma(t)}$ for $t\in[0,T]$ a.e..
	\item\label{cond:2} $\gamma(\cdot)$ joins $q_0$ and $q_1$, that is, $\gamma(0)=q_0$ and $\gamma(T)=q_1$.
	\item $\gamma(\cdot)$ realizes the minimum of the energy functional 
	$$\gamma(\cdot)\longmapsto \E(\gamma(\cdot)):=\frac 12 \int_0^T g(\dot \gamma(t),\dot \gamma(t))\dd t$$
	in the set of all curves satisfying conditions \ref{cond:1} and \ref{cond:2} above.
\end{enumerate}
A solution of \eqref{eqn:sr_geod_prob}, for given points $q_0$ and $q_1$, is called a \emph{minimizing SR geodesic}. Geometrically, SR geodesics are curves $\gamma(\cdot)$ that minimize the length functional $\gamma(\cdot)\mapsto l(\gamma(\cdot)):=\int_0^T\left(g(\dot\gamma(t),\dot\gamma(t))\right)^{\frac 12}\dd t$ but minimization of both problems coincide up to a reparametrization (see comments preceding the statement of Theorem \ref{thm:normal}). 

In a local version, problem \eqref{eqn:sr_geod_prob} can be formulated as an optimal control problem. Indeed, choose  $k$ smooth, linearly independent vector fields $X_1,\hdots, X_k$ spanning locally the distribution $\D$.\footnote{Such a choice is always possible in a single coordinate domain of a local trivialization of the bundle $\D$.} Note that by applying the Gram-Schmidt orthogonalization, without any loss of generality, we may assume that the fields $\{X_i\}_{i=1,\hdots,k}$ form a $g$-orthonormal basis of $\D$. Now every ACB curve $\gamma(t)$ tangent to $\D$ may be considered as a trajectory of the linear control system 
$$\f:Q\times\R^k\lra \T Q\qquad (q,u^i)\longmapsto \f(q,u)=\sum_{i=1}^k u^iX_i(q)$$
corresponding to some \emph{control}, i.e., a locally bounded measurable curve $u(t)=(u^1(t),u^2(t),\hdots,u^k(t))\in\R^k$. In other words, $\gamma(t)$ is an integral curve of a time-dependent vector field $\f_{u(t)}$ obtained by evaluating the map $\f$ on $u(t)$, i.e.,
$$\dot \gamma(t)=\f_{u(t)}(\gamma(t)):=\sum_iu^i(t)X_i(\gamma(t))\quad \text{for $t\in[0,T]$ a.e..}$$
Under the assumption that the fields $X_i$ are normalized as $g(X_i,X_j)=\delta_{ij}$, the energy related with such a $\gamma(\cdot)$ is simply
$$\E(\gamma(\cdot))=\frac 12\int_0^T\sum_i \left(u^i(t)\right)^2\dd t=\frac 12\|u(\cdot)\|_{L^2[0,T]}^2\ .$$

\paragraph{Normal extremals.}	
After formulating a local version of the SR geodesic problem as an optimal control problem it is possible to look for its solution by means of the Pontryagin Maximum Principle (see \cite{Pontr_Inn_math_theor_opt_proc_1962} and \cite{Agrachev_Barilari_Boscain_2012,Agrachev_Sachkov_2004,montgomery2006tour,Rifford_2014}). It is well known that this theorem provides two classes of candidates for solutions, called \emph{normal} and \emph{abnormal} \emph{extremals}, respectively. In this paper we restrict our attention to the first class of such candidates. We shall call them \emph{normal sub-Riemannian extremals} (\emph{NSRE}s, in short). In our considerations we are going to apply a less used geometric characterization of NSREs obtained for the first time in \cite{Alcheikh_Orro_Pelletier_1997} and recently in our paper \cite{MJ_WR_contact_pmp} by another method. In order to formulate that result we will need to introduce a few geometric notions. 

Let $\gamma:[0,T]\ra Q$ be a trajectory of the considered system related with a control $u(t)$ and such that $\gamma(0)=q_0$. Recall that a particular choice of $u(t)$ defines a time-dependent vector field $\f_{u(t)}$ on $Q$. 
For any $t, \tau\in[0,T]$, by $\F_{t\tau}$ we shall denote the \emph{time-dependent flow}\footnote{Calling it a TD-flow should not lead to confusions, although it is not a (local) flow.} (\emph{TD-flow}, in short) of this vector field from time $\tau$ to time $t$. That is, by definition $t\mapsto \F_{t\tau}(q)$ is the integral curve of the considered vector field $\f_{u(t)}$ at time $t$ with the initial condition $q$ at time $t=\tau$. In particular, $\gamma(t)=\F_{t0}(q_0)$. The problem of regularity of such a family of maps $\F_{t\tau}$ was discussed in our previous study (\cite{MJ_WR_contact_pmp}, Lemma~2.1). Here we recall it in Lemma \ref{lem:flows}. In short, if a control $u(t)$ is locally bounded and measurable, then the related $\F_{t\tau}$ (for a fixed $\tau$) is a well-defined family of local diffeomorphisms continuously depending on the variable $t$.   

To characterize NSREs the following object will be of special importance 
$$\F_\bullet(\Dperp)_{\gamma(t)}:=\vect_\R\left\{\T \F_{t\tau}(X)\ |\ X\in \D_{\gamma(\tau)},\ g(X,\dot \gamma(\tau))=0,\ \tau\in[0,T]\right\}\ .$$
Geometrically $\F_\bullet(\Dperp)_{\gamma(t)}$ is the smallest (with respect to the inclusion) distribution along the trajectory $\gamma(t)$ which contains the part of $\D$ which is $g$-orthogonal to $\dot\gamma(t)$ and at the same time is invariant with respect to the TD-flow $\F_{t\tau}$. 

Note that it follows easily from the Cauchy-Schwarz inequality that among different parametrizations of a given curve defined on a fixed time interval $[0,T]$, the one with the uniform parametrization (i.e., $g(\f_{u(t)},\f_{u(t)})^{\frac 12}=\|u(t)\|_2=\operatorname{const.}$) has the smallest energy. Since a global rescaling of the metric $g$ by a constant has no impact on the solutions of the SR geodesic problem, without any loss of generality we may assume that this constant equals to 1. To sum up, when considering the solutions of the SR geodesic problem, without any loss of generality we may assume that the control $u(t)$ is normalized by $\|u(t)\|_2=1$.

Now we are ready to formulate a geometric characterization of NSREs. 
\begin{theorem}[Characterization of NSREs, \cite{Alcheikh_Orro_Pelletier_1997,MJ_WR_contact_pmp}]
\label{thm:normal}
Fix a normalized control $u(t)$,  i.e.,\\ $\left(g(\f_{u(t)},\f_{u(t)})\right)^{\frac 12}=\|u(t)\|_2\equiv 1$ and the corresponding trajectory $\gamma(t)$, $t\in[0,T]$. Then the following are equivalent: 
\begin{enumerate}[(a)]
	\item\label{cond:A_normal} The  trajectory $\gamma(t)$ is a NSRE.
	\item\label{cond:B_normal} The trajectory $\gamma(t)$ satisfies the following two conditions:
	\begin{enumerate}[(b1)]
		\item \label{cond:B_normal1} The velocity $\dot\gamma(t)=\f_{u(t)}(\gamma(t))$ is of class ACB with respect to $t$.
	  \item \label{cond:B_normal2} The distribution  $\F_\bullet(\Dperp)_{\gamma(t)}$
	does not contain the velocity $\dot\gamma(t)=\f_{u(t)}(\gamma(t))$ for any $t\in[0,T]$.
	\end{enumerate}	
\end{enumerate}
\end{theorem}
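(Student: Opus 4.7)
My plan is to pass through the Pontryagin Maximum Principle and translate the existence of the PMP costate into the intrinsic geometric content of (b1)--(b2). The PMP characterizes a NSRE $\gamma$ by the existence of an absolutely continuous curve $\lambda:[0,T]\to \T^*Q$, with $\lambda(t)\in \T^*_{\gamma(t)}Q$, satisfying the maximization identity $u^i(t)=\<\lambda(t),X_i(\gamma(t))>$ and the adjoint equation, which for an affine control system is equivalent to the cotangent-flow invariance $\<\lambda(t),\T\F_{t\tau}\,v>=\<\lambda(\tau),v>$ for every $v\in \T_{\gamma(\tau)}Q$. Since the normal Hamiltonian $H(q,\lambda)=\tfrac12\sum_i\<\lambda,X_i(q)>^2$ is smooth in $\lambda$, the pair $(\gamma(\cdot),\lambda(\cdot))$ is automatically smooth, which I will use tacitly below.

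For the direction (a)$\Rightarrow$(b1),(b2), I first use the regularity of $\lambda$ and smoothness of the $X_i$ to get that each $u^i(t)=\<\lambda(t),X_i(\gamma(t))>$ is absolutely continuous; combined with the normalization $\|u\|_2\equiv 1$ this gives (b1). For (b2) I pick any $v\in\D_{\gamma(\tau)}$ with $g(v,\dot\gamma(\tau))=0$; then $\<\lambda(\tau),v>=g(\dot\gamma(\tau),v)=0$, and cotangent invariance propagates the zero: $\<\lambda(t),\T\F_{t\tau}\,v>=0$. Hence $\lambda(t)$ annihilates the whole distribution $\F_\bullet(\Dperp)_{\gamma(t)}$, while at the same time $\<\lambda(t),\dot\gamma(t)>=\sum_i(u^i(t))^2=1\ne 0$. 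Consequently $\dot\gamma(t)\notin\F_\bullet(\Dperp)_{\gamma(t)}$.

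The reverse direction (b1),(b2)$\Rightarrow$(a), in which one has to reconstruct the costate, is the main obstacle. Using (b2) we have the direct-sum $\F_\bullet(\Dperp)_{q_0}\oplus\R\dot\gamma(0)\subset \T_{q_0}Q$, so I prescribe a covector $\lambda(0)\in \T^*_{q_0}Q$ vanishing on $\F_\bullet(\Dperp)_{q_0}$ with $\<\lambda(0),\dot\gamma(0)>=1$, and extend it non-canonically to the remaining directions. I then define $\lambda(t):=(\T\F_{0t})^*\lambda(0)$, so the adjoint equation is built in. The $\T\F_{t\tau}$-invariance of $\F_\bullet(\Dperp)$ ensures $\lambda(t)$ annihilates this distribution, and hence in particular kills $\Dperp_{\gamma(t)}\subset\D_{\gamma(t)}$, so $\lambda(t)|_{\D_{\gamma(t)}}$ is proportional to $g(\dot\gamma(t),\cdot)|_{\D_{\gamma(t)}}$.

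The delicate point is checking that this proportionality factor is identically $1$, equivalently $\<\lambda(t),\dot\gamma(t)>\equiv 1$. Cotangent invariance does \emph{not} give it for free, since $\dot\gamma(t)$ is generally not $\T\F_{t0}\,\dot\gamma(0)$. Here I would invoke (b1): the ACB regularity of $\dot\gamma$ allows differentiating $\<\lambda(t),\dot\gamma(t)>$ almost everywhere, expressing $\ddot\gamma(t)$ modulo $\F_\bullet(\Dperp)_{\gamma(t)}$ via $\dot u$, and combining this with the annihilation of $\F_\bullet(\Dperp)_{\gamma(t)}$ by $\lambda(t)$ and the normalization $\|u\|_2\equiv 1$ to conclude that the scalar $\<\lambda(t),\dot\gamma(t)>$ is constant, hence equal to its initial value $1$. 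Once this is established, $\<\lambda(t),X_i(\gamma(t))>=u^i(t)$ for all $i$, which is exactly the PMP maximization condition, and $\gamma$ is a NSRE. I expect the main technical hurdle to lie precisely in this last verification, where the two \emph{a priori} unrelated geometric conditions (b1) and (b2) have to cooperate through the evolution.
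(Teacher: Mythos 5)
The paper does not actually contain a proof of this theorem: it is stated as a known characterization, citing \cite{Alcheikh_Orro_Pelletier_1997} for the original result and the authors' companion paper \cite{MJ_WR_contact_pmp} for an alternative contact-geometry derivation. There is therefore no in-paper argument to compare against, so I assess your proposal on its own merits.

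Your PMP-based argument is correct. For (a)$\Rightarrow$(b): the maximization identity $u^i(t)=\<\lambda(t),X_i(\gamma(t))>$ together with orthonormality of the $X_i$ gives $\<\lambda(\tau),v>=g(\dot\gamma(\tau),v)$ for every $v\in\D_{\gamma(\tau)}$, so $\lambda(\tau)$ kills the $g$-orthocomplement of $\dot\gamma(\tau)$ in $\D_{\gamma(\tau)}$; cotangent transport by $(\T\F_{t\tau})^*$ then propagates this to all of $\F_\bullet(\Dperp)_{\gamma(t)}$, while $\<\lambda(t),\dot\gamma(t)>=\|u(t)\|_2^2=1$, which yields (b2), and smoothness of normal extremals yields (b1). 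For (b)$\Rightarrow$(a) you have correctly isolated the crux: after setting $\lambda(t):=(\T\F_{0t})^*\lambda(0)$ one must verify $\<\lambda(t),\dot\gamma(t)>\equiv 1$. This does go through exactly as you sketch: differentiating a.e.\ (which requires (b1)), the $\pa_q\f_u$-terms coming from $\dot\lambda$ and from $\ddot\gamma$ cancel, leaving $\tfrac{d}{dt}\<\lambda(t),\dot\gamma(t)>=\<\lambda(t),\f_{\dot u(t)}(\gamma(t))>$, and the normalization $\|u\|_2\equiv 1$ makes $\f_{\dot u(t)}(\gamma(t))$ $g$-orthogonal to $\dot\gamma(t)$, hence an element of $\F_\bullet(\Dperp)_{\gamma(t)}$, which $\lambda(t)$ annihilates. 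Equivalently, and without differentiating the pairing, the identity $\dot\gamma(t)=\T\F_{t0}[\dot\gamma(0)]+\int_0^t\T\F_{ts}\bigl[\f_{\dot u(s)}(\gamma(s))\bigr]\dd s$ that the authors derive from (b1) in the proof of Lemma~\ref{lem:b_0} shows $\dot\gamma(t)$ differs from $\T\F_{t0}\dot\gamma(0)$ by an element of $\F_\bullet(\Dperp)_{\gamma(t)}$, so $\<\lambda(t),\dot\gamma(t)>=\<\lambda(0),\dot\gamma(0)>=1$ by cotangent invariance alone. Your proposal is sound.
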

In other words, to know if a given trajectory $\gamma(t)$ is a NSRE it is enough to check whether it is $C^1$-smooth with an ACB derivative and then to take along $\gamma(t)$ the part of $\D$ that is $g$-orthogonal to $\dot\gamma(t)$, produce from it new directions by the action of the TD-flow $\F_{t\tau}$ and then ensure that this procedure does not generate a direction tangent to $\gamma(t)$. Under additional technical assumptions, the last condition can be also expressed in the language of Lie brackets. This topic is discussed in detail in Sec. 6.2 of \cite{MJ_WR_contact_pmp}.

\paragraph{Local optimality.}
We shall end our considerations in this section by formulating the main result.  
\begin{theorem}[Local optimality of NSREs]\label{thm:main}
Let $\gamma:[0,T]\ra Q$ be a NSRE. Then there exists a number $\eps>0$ such that for every interval $[a,b]\subset [0,T]$ of length smaller than $\eps$ the curve $\gamma\big|_{[a,b]}:[a,b]\ra Q$ is a minimizing SR geodesic. Moreover, $\gamma\big|_{[a,b]}$ is the unique minimizing geodesic joining its end-points.  
\end{theorem}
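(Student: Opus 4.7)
The plan is to follow the strategy laid out in the introduction: reduce to a local control problem on $\R^n$ with an orthonormal frame $X_1,\hdots,X_k$ for $\D$, fix the NSRE $\gamma_0(\cdot)$ corresponding to a normalized control $u(t)$, and compare $\gamma_0$ with an arbitrary competitor $\gamma_1(\cdot)$ that shares the initial point $q_0=\gamma_0(0)$, corresponds to a control $u(t)+\Delta u(t)$, and satisfies $\E(\gamma_1(\cdot))\le \E(\gamma_0(\cdot))$. I have to prove that $\gamma_1(T)\ne\gamma_0(T)$ whenever $T<\eps$ for some $\eps>0$ depending only on $\gamma_0$. To compare the end-points, I would introduce the natural homotopy $\gamma_s(\cdot)$ driven by $u(t)+s\Delta u(t)$, $s\in[0,1]$, from the common initial point $q_0$, together with its first variation $b_s(t):=\pa_s\gamma_s(t)$. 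Since $\gamma_1(T)-\gamma_0(T)=\int_0^1 b_s(T)\,\dd s$, it suffices to show that $b_0(T)$ dominates the remainder $\int_0^1(b_s(T)-b_0(T))\,\dd s$.

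From the variational equation I expect Lemma \ref{lem:b_0} to give an explicit formula of the form
\begin{equation*}
b_0(T)=\int_0^T \T\F_{T\tau}\cdot X_{\Delta u(\tau)}(\gamma_0(\tau))\,\dd\tau.
\end{equation*}
Splitting $X_{\Delta u(\tau)}=\<u(\tau),\Delta u(\tau)>X_{u(\tau)}+Z(\tau)$, where $Z(\tau)\in\D_{\gamma_0(\tau)}$ is $g$-orthogonal to $\dot\gamma_0(\tau)=X_{u(\tau)}(\gamma_0(\tau))$, the contribution of $Z$ to $b_0(T)$ lies in the distribution $\F_\bullet(\Dperp)_{\gamma_0(T)}$ by its very definition, while the contribution of $\<u,\Delta u>X_u$ is a scalar multiple of the transported velocity (up to lower-order terms in $\F_\bullet(\Dperp)$) and carries the coefficient $\int_0^T\<u,\Delta u>\,\dd\tau$. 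The energy inequality, expanded via Lemma \ref{lem:basic_estimate}, yields $\int_0^T\<u(\tau),\Delta u(\tau)>\,\dd\tau\le -\tfrac12\|\Delta u(\cdot)\|_{L^2[0,T]}^2$, so $b_0(T)$ has a strictly \emph{backward} component along $\dot\gamma_0(T)$ of size at least $c\,\|\Delta u(\cdot)\|_{L^2[0,T]}^2$ for some $c>0$. The NSRE characterization (Theorem \ref{thm:normal}) tells us that $\dot\gamma_0(T)\notin\F_\bullet(\Dperp)_{\gamma_0(T)}$, which is exactly what is needed to ensure that this backward component survives the projection modulo $\F_\bullet(\Dperp)_{\gamma_0(T)}$. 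This is essentially the content of Lemma \ref{lem:b_0_nsre}.

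Finally I would invoke the remainder estimate Lemma \ref{lem:estimate_delta_b}, namely $\|b_s(T)-b_0(T)\|\le C\,T\,\|\Delta u(\cdot)\|_{L^2[0,T]}^2$. Projecting the identity $\gamma_1(T)-\gamma_0(T)=b_0(T)+\int_0^1(b_s(T)-b_0(T))\,\dd s$ onto the line spanned by $\dot\gamma_0(T)$ modulo $\F_\bullet(\Dperp)_{\gamma_0(T)}$, the result is a quantity of magnitude at least $(c-CT)\,\|\Delta u(\cdot)\|_{L^2[0,T]}^2$, which is strictly positive as soon as $T<\eps:=c/C$. Hence $\gamma_1(T)\ne\gamma_0(T)$ uniformly in the competitor $\gamma_1$, and the same argument applied to any subinterval $[a,b]\subset[0,T]$ of length below $\eps$ yields both local minimality and uniqueness. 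The step I expect to be the principal obstacle is Lemma \ref{lem:estimate_delta_b}: the remainder must be \emph{quadratic} rather than merely linear in $\|\Delta u\|_{L^2}$, because $b_0(T)$ itself is only of that order. To extract the extra factor of $\|\Delta u\|_{L^2}$, one must carefully compare the $s$-dependent linear ODE satisfied by $b_s$ along $\gamma_s$ with the corresponding equation along $\gamma_0$; the natural strategy is to apply Gr\"onwall's inequality first to the displacement $\gamma_s-\gamma_0$ and then to $b_s-b_0$, combining them with a mean value argument that extracts the additional smallness from the distance between the two base curves.
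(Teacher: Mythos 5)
Your proposal reproduces the paper's argument almost verbatim: the same natural homotopy, the same decomposition of $b_0(T)$ into a $\dot\gamma_0(T)$-multiple and an $\F_\bullet(\Dperp)$-component, the lower bound on $\|b_0(T)\|_2$ from the energy inequality combined with the NSRE transversality condition, and the quadratic remainder estimate via Gr\"onwall applied first to $\gamma_s-\gamma_0$ and then to $b_s-b_0$. The only detail you leave implicit (and should state) is that $\eps$ must also be chosen small enough to force $\|\Delta u(\cdot)\|_{L^2[0,T]}\le 2\sqrt{T}<\delta$, which is what guarantees the whole homotopy $\gamma_s(\cdot)$ stays inside the compact convex set $\Omega$ on which the remainder estimate is valid; the paper encodes this as the first of the two conditions defining $\eps$.
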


Actually, the formulation of Theorem \ref{thm:main} may lead to some confusion as we are considering the restrictions of curves to the interval $[a,b]$, while SR geodesics were defined as curves parametrized by the interval $[0,T]$. Therefore whenever we speak about a restriction $\gamma\big|_{[a,b]}$ of a curve $\gamma:[0,T]\ra Q$ to a sub-interval $[a,b]\subset[0,T]$, we will actually mean a curve $\wt\gamma:[0,T']\ra Q$, being a natural reparametrization of $\gamma\big|_{[a,b]}$ given by $\wt\gamma(t):=\gamma(t-a)$. Here $T':=b-a$ plays the role of the new $T$. Occasionally, we will speak about properties of $\gamma(\cdot)$ invariant under taking such restrictions in the sense of the definition below. 
\begin{definition}\label{def:restriction}
Assume that a curve $\gamma:[0,T]\ra Q$ has a property $A$ ($A$ may depend on $T$). We say that $A$ is \emph{restriction invariant}, if for any sub-interval $[a,b]\subset[0,T]$ the curve $\wt \gamma:[0,T']\ra Q$ defined by $\wt\gamma(t):=\gamma(t-a)$ also has the property $A$. 
\end{definition}
In particular, Theorem \ref{thm:main} implies that the property of being a minimizing SR geodesic is restriction invariant.

\section{The homotopy approach}\label{sec:homotopy}
Since Theorem \ref{thm:main} has a local character, it is enough to restrict our attention to a special case where $\gamma:[0,T]\ra \Omega$, where $\Omega\subset\R^n$ is open, relatively compact and convex. 
Thus from now on we shall be working within the following framework. We assume that  $\gamma:[0,T]\ra\Omega\subset \R^n$ is a trajectory of the control system $(q,u)\mapsto \f(q,u)=\sum_{i=1}^ku^iX_i(q)$ corresponding to a control $u(t)\in \R^k$, where the fields $X_1,\hdots,X_k$ spanning $\D$ are assumed to be $g$-orthonormal, and $u(t)$ is assumed to be normalized by $\| u(t)\|_2\equiv 1$. 

\paragraph{A comparison of energies.}
In order to address the question of local optimality of $\gamma(\cdot)$ one needs to compare the energy of $\gamma(\cdot)$ with the energies of other trajectories of the considered control system. Let now $\wt \gamma:[0,T]\ra \R^n$ be a trajectory of the system corresponding to another control $\wt u(t)=u(t)+\Delta u(t)$ and sharing the same initial-point, i.e., $\wt\gamma(0)=\gamma(0)=q_0$. At the moment we do not impose any restrictions on the end-point $\wt\gamma(T)$. Denote by $\phi(t)$ the $g$-scalar product of $\f_{u(t)}$ and $\f_{\Delta u(t)}$:
\begin{equation}
\label{eqn:phi}
\phi(t):=g(\f_{u(t)},\f_{\Delta u(t)})=\<u(t),\Delta u(t)>_{\R^k}\ ,
\end{equation}
where $\<\cdot,\cdot>_{\R^k}$ denotes the standard scalar product in $\R^k$. Geometrically $\phi(t)$ measures the part of $\f_{\Delta u(t)}$ in the direction of $\f_{u(t)}$, i.e., we can decompose
\begin{equation}
\label{eqn:delta_u_decomposition}
\f_{\Delta u(t)}=\phi(t)\cdot \f_{u(t)}+\f^\perp_{u(t)}\ ,
\end{equation}
where $g(\f^\perp_{u(t)},\f_{u(t)})=0$. Comparing the energies of $\wt\gamma(\cdot)$ and $\gamma(\cdot)$ allows to deduce certain relations between $\phi(t)$, $\|\Delta u(\cdot)\|_{L^2[0,T]}$ and $T$. 

\begin{lemma}\label{lem:basic_estimate}
Assume that $\E(\wt\gamma(\cdot))\leq\E(\gamma(\cdot))$. Then
\begin{equation}
\label{eqn:basic_estimate}
-\int_0^T\phi(t)\dd t\geq \frac 12\|\Delta u(\cdot)\|_{L^2[0,T]}^2\ .
\end{equation}
Moreover, $\|\Delta u(\cdot)\|_{L^2[0,T]}\leq 2\sqrt T$. 
\end{lemma}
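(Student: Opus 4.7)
The plan is to expand the energy $\E(\wt\gamma(\cdot))$ in terms of $u(t)$ and $\Delta u(t)$ and then read off both estimates directly. Since the fields $X_i$ are $g$-orthonormal, the energy of a trajectory corresponding to control $v(t)$ equals $\tfrac{1}{2}\|v(\cdot)\|^2_{L^2[0,T]}$. Because $u(t)$ is normalized by $\|u(t)\|_2 \equiv 1$, we have $\E(\gamma(\cdot)) = T/2$, and expanding the inner product inside the integral gives
\begin{equation*}
\E(\wt\gamma(\cdot)) = \tfrac{1}{2}\int_0^T \|u(t)+\Delta u(t)\|_2^2\,\dd t
= \tfrac{T}{2} + \int_0^T\phi(t)\,\dd t + \tfrac{1}{2}\|\Delta u(\cdot)\|_{L^2[0,T]}^2,
\end{equation*}
using the definition \eqref{eqn:phi} of $\phi(t) = \langle u(t), \Delta u(t)\rangle_{\R^k}$ for the cross term.

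Now the assumption $\E(\wt\gamma(\cdot)) \leq \E(\gamma(\cdot))$ is equivalent to
$\int_0^T\phi(t)\,\dd t + \tfrac{1}{2}\|\Delta u(\cdot)\|_{L^2[0,T]}^2 \leq 0$,
which rearranges immediately to the desired inequality \eqref{eqn:basic_estimate}.

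For the second claim, I would bound $\int_0^T\phi(t)\,\dd t$ from below using Cauchy--Schwarz twice. Since $\|u(t)\|_2 = 1$, pointwise $|\phi(t)| \leq \|\Delta u(t)\|_2$, and then the $L^2$--$L^2$ Cauchy--Schwarz inequality on $[0,T]$ yields
\begin{equation*}
\Bigl|\int_0^T\phi(t)\,\dd t\Bigr| \leq \int_0^T \|\Delta u(t)\|_2\,\dd t \leq \sqrt{T}\,\|\Delta u(\cdot)\|_{L^2[0,T]}.
\end{equation*}
Combining with \eqref{eqn:basic_estimate} gives $\tfrac{1}{2}\|\Delta u(\cdot)\|_{L^2[0,T]}^2 \leq \sqrt{T}\,\|\Delta u(\cdot)\|_{L^2[0,T]}$, from which $\|\Delta u(\cdot)\|_{L^2[0,T]} \leq 2\sqrt{T}$ follows (the bound being trivial if $\|\Delta u(\cdot)\|_{L^2[0,T]} = 0$).

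There is no real obstacle here: the whole argument is a direct expansion of a square followed by two applications of Cauchy--Schwarz, and the normalization $\|u(t)\|_2 \equiv 1$ does all the work. The only thing to be careful about is the sign bookkeeping when reading off \eqref{eqn:basic_estimate} from the energy inequality.
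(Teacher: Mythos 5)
Your proof is correct and follows essentially the same route as the paper: expand the square in the energy to read off \eqref{eqn:basic_estimate}, then bound $|\int_0^T\phi|$ by $\sqrt{T}\,\|\Delta u(\cdot)\|_{L^2[0,T]}$ via Cauchy--Schwarz and rearrange. The only cosmetic difference is that the paper derives the pointwise bound $\phi^2(t)\leq\|\Delta u(t)\|_2^2$ from the orthogonal decomposition \eqref{eqn:delta_u_decomposition} and then applies Cauchy--Schwarz to $\int\phi\cdot 1$, whereas you get $|\phi(t)|\leq\|\Delta u(t)\|_2$ directly from $\|u(t)\|_2=1$ and apply Cauchy--Schwarz to $\int\|\Delta u(t)\|_2\cdot 1$; these are the same estimate in a slightly different order.
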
 
\begin{proof} The proof is just a matter of a simple calculation. By the assumption 
\begin{align*}
\frac 12 \|u(\cdot)\|_{L^2[0,T]}^2=\E(\gamma(\cdot))\geq &\E(\wt\gamma(\cdot))=\frac 12\|u(\cdot)+\Delta u(\cdot)\|_{L^2[0,T]}^2\\
=&\frac 12\|u(\cdot)\|_{L^2[0,T]}^2+\frac 12\|\Delta u(\cdot)\|_{L^2[0,T]}^2+\int_0^T\<u(t),\Delta u(t)>_{\R^k}\dd t\\ 
\overset{\eqref{eqn:phi}}=&\frac 12\|u(\cdot)\|_{L^2[0,T]}^2+\frac 12\|\Delta u(\cdot)\|_{L^2[0,T]}^2+\int_0^T\phi(t)\dd t\end{align*}
and the first part of the assertion follows. To prove the upper estimate for $\|\Delta u(\cdot)\|_{L^2[0,T]}$, note that due to \eqref{eqn:delta_u_decomposition} we can estimate
$$\|\Delta u(t)\|_2^2=g(\f_{\Delta u(t)},\f_{\Delta u(t)})\geq g(\phi(t)\f_{u(t)},\phi(t)\f_{u(t)})=\phi^2(t)\ .$$
Integrating the above inequality we get $\int_0^T\phi^2(t)\dd t\leq \|\Delta u(\cdot)\|_{L^2[0,T]}^2$. Now from \eqref{eqn:basic_estimate} and Cauchy-Schwarz's inequality 
$$\frac 12 \|\Delta u(\cdot)\|_{L^2[0,T]}^2\leq\big|\int_0^T\phi(t)\cdot 1\dd t\big|\overset{\text{C-S}}\leq\left(\int_0^T \phi^2(t)\dd t\right)^{\frac 12}\cdot \left(\int_0^T 1^2\dd t\right)^{\frac 12}\leq \|\Delta u(\cdot)\|_{L^2[0,T]}\cdot \sqrt T\ .$$\end{proof}

\paragraph{Construction of the natural homotopy.}
A crucial idea in our proof of Theorem \ref{thm:main} is to study a homotopy $\gamma_s(\cdot)$ of trajectories in order to compare two trajectories $\gamma_0(\cdot)$ and $\gamma_1(\cdot)$. We study in detail properties of the variation of $\gamma_s(\cdot)$ with respect to the parameter $s$. Such a point of view allows to interpret the rather mysterious geometric conditions \eqref{cond:B_normal} from Theorem~\ref{thm:normal} characterizing NSREs as an information about the above-mentioned variations (see Lemmata~\ref{lem:b_0} and \ref{lem:b_0_nsre}). Now to prove Theorem~\ref{thm:main} it suffices to make an estimate on variations $\|\pa_s\gamma_s(t)-\pa_s\big|_{0}\gamma_s(t)\|_2$ by a method routinely applied to parameter-dependent ODEs.

 Consider a trajectory $\gamma:[0,T]\ra \R^n$ corresponding to the control $u(t)$, and a trajectory $\wt\gamma:[0,T]\ra \R^n$ corresponding to the control $\wt u(t)=u(t)+\Delta u(t)$, both sharing the same initial-point $q_0$. It is natural to construct a family of curves $\gamma_s:[0,T]\ra \R^n$, where for each $s\in[0,1]$ symbol $\gamma_s(\cdot)$ denotes the trajectory of the control system corresponding to the control $u(t)+s\Delta u(t)$ and such that $\gamma_s(0)=\gamma(0)=q_0$. Clearly $\gamma_0(\cdot)=\gamma(\cdot)$ and $\gamma_1(\cdot)=\wt\gamma(\cdot)$, i.e., $\gamma_s(\cdot)$ is a homotopy joining $\gamma(\cdot)$ and $\wt\gamma(\cdot)$, see Figure \ref{fig:idea}. We shall call it the \emph{natural homotopy} between $\gamma(\cdot)$ and $\wt \gamma(\cdot)$.

In our considerations studying the \emph{variation} of $\gamma_s(\cdot)$ with respect to the parameter $s$ will be of crucial importance. We denote
$$b_s(t):=\pa_s \gamma_s(t)\ .$$
At $s=0$ this variation can be expressed in terms of the TD-flow $\F_{t\tau}$ of the field $\f_{u(t)}$.\footnote{Actually the results of Lemma \ref{lem:b_0} can be easily extended to describe $b_s(t)$ for any $s\in[0,1]$ in terms of the TD-flow of the field $\f_{u(t)+s\Delta u(t)}$.} Moreover, if $\gamma(\cdot)$ satisfies the regularity condition \eqref{cond:B_normal1} of Theorem \ref{thm:normal}, we can also estimate the $\f_{u(t)}(\gamma(t))$-component of $b_0(t)$.
\begin{lemma}\label{lem:b_0}
For every $t\in[0,T]$, we have
\begin{equation}
\label{eqn:b_0}
b_0(t)=\int_0^t \T\F_{t\tau}\left[\f_{\Delta u(\tau)}(\gamma(\tau))\right]\dd\tau\ .
\end{equation}
Moreover, if $\gamma(\cdot)$ is $C^1$-smooth with an ACB derivative, and the control vector field $\f_{u(t)}$ is normalized as $g(\f_{u(t)},\f_{u(t)})=1$, then we may represent $b_0(t)$ as
\begin{equation}
\label{eqn:b_0_decomposition}
b_0(t)=\int_0^t \phi(\tau)\dd\tau \cdot \dot\gamma(t)+\wh b_0(t)\ ,
\end{equation}
where $\wh b_0(t)\in \F_\bullet (\Dperp)_{\gamma(t)}$ and $\phi(t)=\<u(t),\Delta u(t)>_{\R^k}$. 
\end{lemma}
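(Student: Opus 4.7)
For \eqref{eqn:b_0} my plan is variation of constants. Differentiating the defining ODE $\dot\gamma_s(t)=\f_{u(t)+s\Delta u(t)}(\gamma_s(t))$, $\gamma_s(0)=q_0$, in $s$ and interchanging $\pa_s$ with $\pa_t$, one gets the linear inhomogeneous equation $\dot b_s(t) = \pa_q\f_{u(t)+s\Delta u(t)}|_{\gamma_s(t)}\cdot b_s(t) + \f_{\Delta u(t)}(\gamma_s(t))$ with $b_s(0)=0$. Setting $s=0$, the associated homogeneous equation is the variational equation along $\gamma$, whose fundamental matrix is precisely the tangent map $\T\F_{t\tau}|_{\gamma(\tau)}$ of the TD-flow. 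The variation-of-constants formula then yields \eqref{eqn:b_0}.

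\textbf{Part 2.} For \eqref{eqn:b_0_decomposition} I start from \eqref{eqn:b_0} and split the integrand using \eqref{eqn:delta_u_decomposition}, i.e., $\f_{\Delta u(\tau)} = \phi(\tau)\f_{u(\tau)} + \f_{u(\tau)}^\perp$. Since $\f_{u(\tau)}^\perp(\gamma(\tau))\in\D_{\gamma(\tau)}$ is $g$-orthogonal to $\dot\gamma(\tau)$, the perpendicular contribution $\int_0^t\T\F_{t\tau}[\f_{u(\tau)}^\perp(\gamma(\tau))]\dd\tau$ lies in $\F_\bullet(\Dperp)_{\gamma(t)}$ by the very definition of that distribution. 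What remains is the parallel integral $J(t):=\int_0^t\phi(\tau)h(\tau)\dd\tau$, where $h(\tau):=\T\F_{t\tau}[\dot\gamma(\tau)]$, and on this I plan to integrate by parts. Writing $A(\sigma):=\int_0^\sigma\phi$ and using $\F_{tt}=\Id$, hence $h(t)=\dot\gamma(t)$, this produces $J(t) = \int_0^t\phi(\tau)\dd\tau\cdot\dot\gamma(t) - \int_0^t A(\sigma)h'(\sigma)\dd\sigma$.

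\textbf{Main step and obstacle.} The crux is to reduce $h'(\sigma)$ to something manifestly in $\F_\bullet(\Dperp)_{\gamma(t)}$. Differentiating $\F_{t\tau}\circ\F_{\tau\sigma}=\F_{t\sigma}$ in $\sigma$ at $\tau=\sigma$ yields the flow identity $\tfrac{\dd}{\dd\sigma}[\T\F_{t\sigma}|_{\gamma(\sigma)}] = -\T\F_{t\sigma}\cdot\pa_q\f_{u(\sigma)}|_{\gamma(\sigma)}$; combining this with $\ddot\gamma(\sigma) = \f_{\dot u(\sigma)}(\gamma(\sigma)) + \pa_q\f_{u(\sigma)}\cdot\dot\gamma(\sigma)$ (valid a.e.\ by (b1)), the two $\pa_q\f_{u(\sigma)}$-transport terms cancel, leaving the clean a.e.\ identity $h'(\sigma) = \T\F_{t\sigma}[\f_{\dot u(\sigma)}(\gamma(\sigma))]$. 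This cancellation is the technical heart of the argument and the main obstacle; the regularity (b1) enters precisely here, since it is exactly what guarantees that $\dot u$ (equivalently $\ddot\gamma$) exists a.e.\ and is essentially bounded, so the integration by parts is meaningful in the Lebesgue sense. Finally, the normalization $\|u(\sigma)\|_2\equiv 1$ forces $\<u(\sigma),\dot u(\sigma)>_{\R^k}=0$, hence $\f_{\dot u(\sigma)}(\gamma(\sigma))\in\D_{\gamma(\sigma)}$ is $g$-orthogonal to $\dot\gamma(\sigma)$, so $h'(\sigma)\in\F_\bullet(\Dperp)_{\gamma(t)}$. Assembling the pieces yields \eqref{eqn:b_0_decomposition} with $\wh b_0(t)\in\F_\bullet(\Dperp)_{\gamma(t)}$.
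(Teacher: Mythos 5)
Your proof is correct and takes essentially the same route as the paper's: Part~1 is identical, and Part~2 rests on the same three ingredients---the decomposition $\f_{\Delta u}=\phi\,\f_u+\f_u^\perp$, the a.e.\ identity $\ddot\gamma=\f_{\dot u}+\pa_q\f_u\cdot\dot\gamma$ furnished by (b1), and the orthogonality $\<u,\dot u>_{\R^k}\equiv 0$ coming from the normalization. The only cosmetic difference is that the paper first integrates the $\ddot\gamma$-equation by variation of constants to obtain $\dot\gamma(t)=\T\F_{t\tau}\left[\dot\gamma(\tau)\right]+\int_\tau^t\T\F_{ts}\left[\f_{\dot u(s)}(\gamma(s))\right]\dd s$ and then substitutes this into the parallel integral, whereas you integrate by parts and differentiate $h$ via the backward flow identity; these are dual manipulations (Fubini converts one remainder term into the other) and land on the identical $\wh b_0(t)$.
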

\begin{proof} By definition, $\pa_t \gamma_s(t)\overset{\text{a.e.}}=\f_{u(t)+s\Delta u(t)}(\gamma_s(t))=\sum_i\left(u^i(t)+s\Delta u^i(t)\right)X_i(\gamma_s(t))$. Thus $\gamma_s(t)$ is a solution (in the sense of Caratheodory) of an ODE of the form $\dot x=f(x,t,s)$, where $f$ depends smoothly on the position $x$ and the parameter $s$, and measurably on the time $t$. The standard theory of existence and uniqueness of solutions of ODEs and their regularity with respect to the initial conditions and parameters can be directly applied to such ODEs. Basically the Piccard's method works equally well in the measurable setting as in the continuous one (see \cite{Bressan_Piccoli_2004}). It follows that $\gamma_s(t)$ is differentiable with respect to $s$, i.e., the variation $b_s(t)$ is well-defined, and, moreover, that $b_s(t)$ is a solution (in the sense of Caratheodory) of variation of the initial equation for $\gamma_s(t)$ (and the mixed partial derivatives commute a.e.). That is, 
\begin{equation}
\label{eqn:b_dot}
\begin{split}
\pa_t b_s(t)=&\pa_t\pa_s \gamma_s(t)\overset{\text{a.e.}}=\pa_s\pa_t \gamma_s(t)=\pa_s\left[\sum_i\left(u^i(t)+s\Delta u^i(t)\right)X_i(\gamma_s(t))\right]\\
=&\sum_i\Delta u^i(t) X_i(\gamma_s(t))+\sum_i\left(u^i(t)+s\Delta u^i(t)\right)\frac{\pa X_i}{\pa q}\bigg|_{\gamma_s(t)}b_s(t)\ .
\end{split}
\end{equation}
In particular, $b_0(t)$ is a solution of the linear non-homogeneous time-varying ODE
\begin{equation}
\label{eqn:b_0_dot}
\pa_t b_0(t)=\f_{\Delta u(t)}(\gamma(t))+\pa_q \f_{u(t)}(q)\Big|_{\gamma(t)}b_0(t)\ ,
\end{equation}
with the initial condition $b_0(0)=\pa_s\big|_0\gamma_s(0)=\pa_s\big|_0q_0=0$, and can be easily found by the standard methods. Indeed, consider first the homogeneous equation
$$\dot Y(t)=\pa_q \f_{u(t)}(q)\big|_{\gamma(t)}Y(t) $$
for a matrix $Y(t)$ with the initial condition $Y(0)=\Id_{\T_{\gamma(0)}\R^n}$.
Clearly, this is simply the variational equation of $\dot \gamma(t)=\f_{u(t)}(\gamma(t))$. Thus the solution of the homogeneous equation is provided by the family of tangent maps $t\mapsto \T \F_{t0}$ given by the TD-flow $\F_{t\tau}$ of $\f_{u(t)}$ evaluated at $\tau=0$. Now formula \eqref{eqn:b_0} follows directly from the standard theorem describing solutions of linear non-homogeneous time-varying ODEs (based on the variation of constant method).\medskip

Assume now that $t\mapsto \dot \gamma(t)=\f_{u(t)}(\gamma(t))$ is ACB. Then its derivative is well-defined a.e. and satisfies
$$\ddot \gamma(t)=\sum_i \dot u^i(t)X_i(\gamma(t))+\sum_i u^i(t)\frac{\pa X_i}{\pa q}\Big|_{\gamma(t)}\dot \gamma(t)\ .$$
This equation is of the same nature as \eqref{eqn:b_0_dot} and thus its solution (treating $\dot \gamma(\tau)$ as an initial condition at $t=\tau$) reads as
\begin{equation}
\label{eqn:dot_gamma}
\dot \gamma(t)=\T\F_{t\tau}\left[\dot\gamma(\tau)\right]+\int_\tau^t \T\F_{ts}\left[\f_{\dot u(s)}(\gamma(s))\right]\dd s\,.\end{equation}
Note that $\f_{\dot u(s)}(\gamma(s)):=\sum_i\dot u^i(s)X_i(\gamma(s))$ is $g$-orthogonal to $\dot\gamma(s)=\sum_i u^i(s)X_i(\gamma(s))$, which follows easily from the normalization condition.
 
Using the above formula, the decomposition \eqref{eqn:delta_u_decomposition}, and the linearity of the tangent map we get 
\begin{align*}
b_0(t)\overset{\eqref{eqn:b_0}}=&\int_0^tT \F_{t\tau}\left[\f_{\Delta u(\tau)}(\gamma(\tau))\right]\dd\tau\\
\overset{\eqref{eqn:delta_u_decomposition}}=&\int_0^t\F_{t\tau}\left[\phi(\tau)\cdot \dot\gamma(\tau)+\f_{ u(\tau)}^\perp(\gamma(\tau))\right]\dd\tau\\
=&\int_0^t\phi(\tau)\cdot\T\F_{t\tau}\left[\dot\gamma(\tau)\right]\dd \tau+\int_0^tT \F_{t\tau}\left[\f_{u(\tau)}^\perp(\gamma(\tau))\right]\dd\tau\\
\overset{\eqref{eqn:dot_gamma}}=&\int_0^t\phi(\tau)\cdot \left[\dot\gamma(t)-\int_\tau^t\T\F_{ts}\left[\f_{\dot u(s)}(\gamma(s))\right]\dd s\right]\dd \tau+\int_0^tT \F_{t\tau}\left[\f_{u(\tau)}^\perp(\gamma(\tau))\right]\dd\tau\\
=&\int_0^t\phi(\tau)\dd \tau \cdot \dot\gamma(t)-\int_0^t \phi(\tau)\int_\tau^t\T\F_{ts}\left[\f_{\dot u(s)}(\gamma(s))\right]\dd s\dd \tau+\int_0^t\T\F_{ts}\left[\f_{u(s)}^\perp(\gamma(s))\right]\dd s\ .
\end{align*}
To check that the element 
$$\wh b_0(t):=-\int_0^t \phi(\tau)\int_\tau^t\T\F_{ts}\left[\f_{\dot u(s)}(\gamma(s))\right]\dd s\dd \tau+\int_0^t\T \F_{ts}\left[\f_{u(s)}^\perp(\gamma(s))\right]\dd s$$
belongs to $\F_\bullet(\Dperp)_{\gamma(t)}$ note that, for each $s\in[0,t]$, the element $\T \F_{ts}\left[\f_{u(s)}^\perp(\gamma(s))\right]$ as well as the element  $\T\F_{ts}\left[\f_{\dot u(s)}(\gamma(s))\right]$  belong to $\F_\bullet (\Dperp)_{\gamma(t)}$ as the images of vectors orthogonal to $\dot\gamma(t)$ under the TD-flow $\T \F_{ts}$. Thus also integration of such terms gives an element of $\F_\bullet(\Dperp)_{\gamma(t)}$.  This ends the proof. \end{proof} 

Formula \eqref{eqn:b_0_decomposition} in the above result provides the splitting of $b_0(t)$ into $\dot\gamma(t)$- and $\F_\bullet(\Dperp)_{\gamma(t)}$-parts. If, additionally, $\gamma(\cdot)$ is a NSRE, then condition \eqref{cond:B_normal2} from Theorem \ref{thm:normal} guarantees that the velocity $\dot\gamma(t)$ does not belong to $\F_\bullet(\Dperp)_{\gamma(t)}$, which allows to make the following estimate.

\begin{lemma}\label{lem:b_0_nsre}
If $\gamma:[0,T]\ra \R^n$ is a NSRE, then there exist a number $c>0$, depending only on $\gamma(\cdot)$, such that for every $t\in[0,T]$ 
\begin{equation}
\label{eqn:b_0_estimate}
\|b_0(t)\|_2\geq c\cdot|\int_0^t\phi(\tau)\dd \tau|\ .
\end{equation}  
Moreover, the above inequality (with the same constant $c$) is restriction invariant in the sense of Definition~\ref{def:restriction}. 
\end{lemma}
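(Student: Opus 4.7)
The plan is to exploit the decomposition from Lemma \ref{lem:b_0} and convert condition \eqref{cond:B_normal2} of Theorem \ref{thm:normal} into a uniform lower bound on the ``angle'' between $\dot\gamma(t)$ and the subspace $V_t:=\F_\bullet(\Dperp)_{\gamma(t)}$. Write $\Psi(t):=\int_0^t\phi(\tau)\dd\tau$ and define
$$c(t) := \inf_{w \in V_t} \|\dot\gamma(t) - w\|_2.$$
Homogeneity of the norm then yields $\|\alpha\dot\gamma(t) + w\|_2 \geq |\alpha|c(t)$ for every $\alpha \in \R$ and $w \in V_t$ (trivially for $\alpha=0$, and by scaling $w$ by $-1/\alpha$ otherwise). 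Applied to \eqref{eqn:b_0_decomposition} with $\alpha=\Psi(t)$ and $w=\wh b_0(t)$, this gives $\|b_0(t)\|_2 \geq c(t)|\Psi(t)|$, so the lemma reduces to verifying that $c := \inf_{t \in [0,T]} c(t) > 0$.

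The key observation is that $V_t$ is invariant under the tangent TD-flow: the cocycle identity $\T\F_{t\tau}\circ\T\F_{\tau 0}=\T\F_{t0}$ combined with the defining formula for $\F_\bullet(\Dperp)$ gives $V_t = \T\F_{t0}(V_0)$. Fixing a basis $(e_1,\ldots,e_m)$ of $V_0$, the push-forwards $E_i(t):=\T\F_{t0}(e_i)$ form a linearly independent frame for $V_t$ depending continuously on $t$ by Lemma \ref{lem:flows}. Together with the continuity of $t\mapsto\dot\gamma(t)$ (which holds since the derivative is ACB by \eqref{cond:B_normal1}), the function $t\mapsto c(t)$---a Euclidean distance from a continuously moving point to a continuously moving linear subspace---is itself continuous on $[0,T]$. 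Condition \eqref{cond:B_normal2} yields $\dot\gamma(t)\notin V_t$, hence $c(t)>0$ pointwise, and compactness of $[0,T]$ upgrades this to the desired uniform bound $c>0$, which depends only on $\gamma(\cdot)$.

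For restriction invariance, let $[a,b]\subseteq[0,T]$ and consider $\wt\gamma(t):=\gamma(t+a)$, which is again a NSRE by Theorem \ref{thm:normal}. The associated subspace $\wt V_s$ is defined by the same recipe but with the generating parameter $\tau$ restricted to the sub-interval; therefore $\wt V_s\subseteq V_{s+a}$, the analogous distance satisfies $\wt c(s)\geq c(s+a)\geq c$, and \eqref{eqn:b_0_estimate} propagates to the restriction with the same constant $c$. The main conceptual step---and the place where one has to be careful---is justifying continuity of the moving subspace $V_t$; the identity $V_t=\T\F_{t0}(V_0)$ is what reduces this to the continuity of the tangent TD-flow supplied by Lemma \ref{lem:flows}, after which the remaining estimates are routine linear algebra on subspaces of $\R^n$.
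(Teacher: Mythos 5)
Your proof is correct and takes essentially the same approach as the paper: it applies a distance-to-subspace (equivalently $\sin\theta$) lower bound to the decomposition \eqref{eqn:b_0_decomposition}, then obtains a uniform constant from continuity of $t\mapsto\dot\gamma(t)$ and of $t\mapsto V_t = \T\F_{t0}(V_0)$ together with compactness of $[0,T]$. A small bonus of your write-up is the explicit justification of restriction invariance via the inclusion $\wt V_s \subseteq V_{s+a}$, a step the paper's proof leaves implicit.
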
 
\begin{proof}
The norm of any $\R^n$-vector of the form $v=\alpha\cdot v_0+w$, where $v_0\in\R^n$ is a given vector, $\alpha$ is a real number, and $w$ belongs to a given linear subspace $W\subset \R^n$ can be estimated from below as follows
$$\|v\|_2\geq |\alpha|\cdot\|v_0\|_2\cdot|\sin \theta|\ ,$$
where $\theta$ is the angle between the vector $v_0$ and the subspace $W$. Applying this simple estimate to formula~\eqref{eqn:b_0_decomposition} (recall that, by Theorem \ref{thm:normal}, a NSRE is sufficiently regular for \eqref{eqn:b_0_decomposition} to hold) we conclude that for every $t\in[0,T]$ 
$$ \|b_0(t)\|_2\geq |\int_0^t\phi(t)\dd t|\cdot\|\f_{u(t)}(\gamma(t))\|_2\cdot |\sin\theta (t)|\ ,$$
where $\theta(t)$ is the angle between $\dot\gamma(t)=\f_{u(t)}(\gamma(t))$ and the space $\F_\bullet(\Dperp)_{\gamma(t)}$. 
By the results of Theorem~\ref{thm:main} we know that $\theta(t)\neq 0$. Now it is enough to take $c=\inf_{t\in[0,T]}|\sin\theta(t)|\cdot \inf_{t\in[0,T]}\|\f_{u(t)}(\gamma(t))\|_2$ and make sure that $c$ defined in such a way is non-zero. This follows easily from the fact that both assignments $t\mapsto \f_{u(t)}(\gamma(t))$ and $t\mapsto \F_\bullet(\Dperp)_{\gamma(t)}$ are continuous with respect to $t$ and defined over the compact set $[0,T]$. Indeed, $t\mapsto \f_{u(t)}(\gamma(t))$ is continuous by Theorem \ref{thm:normal} and $t\mapsto \F_\bullet(\Dperp)_{\gamma(t)}$ as the image of $\F_\bullet(\Dperp)_{\gamma(0)}$ under $t\mapsto \T \F_{t0}(\cdot)$ which,  by Lemma \ref{lem:flows}, is continuous. Here we use the $\T \F_{t0}$-invariance of $\F_\bullet(\Dperp)$.
\end{proof}  

 For future purposes we will also need to estimate the difference of $b_s(t)-b_0(t)$ for arbitrary $s\in[0,1]$. This can be achieved provided that the whole natural homotopy $\gamma_s(\cdot)$ lies in a given relatively compact and convex neighborhood of $\gamma(\cdot)$. 

\begin{lemma}\label{lem:estimate_delta_b}
Let $\gamma:[0,T]\ra \Omega\subset\R^n$ be a trajectory of the considered control system (not necessarily a NSRE) as above. There exists a number $\delta>0$, depending only on $\Omega$ and $\gamma(\cdot)$ such that for any $\wt\gamma(\cdot)$ if $\|\Delta u(\cdot)\|_{L^2[0,T]}\leq \delta$, then the whole homotopy $\gamma_s(\cdot)$ lies inside $\Omega$. 

Assuming that the homotopy $\gamma_s(\cdot)$ lies in $\Omega$ and that $\E(\wt\gamma(\cdot))\leq \E(\gamma(\cdot))$, there exists a continuous function $\xi:\R\ra\R_+$ such that for every $t\in[0,T]$ and every $s\in[0,1]$ we can estimate
\begin{equation} p
\label{eqn:estimate_delta_b}
\|b_s(t)-b_0(t)\|_2\leq T \xi(T)\|\Delta u(\cdot)\|^2_{L^2[0,T]}\ .
\end{equation} 
Moreover, the number $\delta$ and the above inequality (with the same function $\xi$) are restriction invariant in the sense of Definition \ref{def:restriction}.
\end{lemma}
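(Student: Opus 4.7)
The strategy is to apply Gr\"onwall's inequality twice: first to control the uniform distance from the deformed trajectory $\gamma_s(\cdot)$ to $\gamma_0(\cdot)$, which produces the number $\delta$; and then to estimate the variation difference $\Delta b(t):=b_s(t)-b_0(t)$ by writing down and dominating the linear non-homogeneous ODE it satisfies, using the previously obtained bound on $\|\gamma_s-\gamma_0\|_2$ as input.

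\emph{Step 1 (containment and $\delta$).} Since $\Omega$ is relatively compact, on a suitable compact neighborhood $K\subset\Omega$ of $\gamma([0,T])$ the vector fields $X_1,\ldots,X_k$ and their first two derivatives are bounded by some constant $M$, and each $X_i$ is Lipschitz with constant $L$. Subtracting $\dot\gamma_s(t)=\f_{u(t)+s\Delta u(t)}(\gamma_s(t))$ and $\dot\gamma_0(t)=\f_{u(t)}(\gamma_0(t))$, integrating, and using the triangle and Cauchy--Schwarz inequalities while both curves remain in $K$, one gets
\[ \|\gamma_s(t)-\gamma_0(t)\|_2 \leq L\int_0^t(1+\|u(\tau)+s\Delta u(\tau)\|_2)\|\gamma_s(\tau)-\gamma_0(\tau)\|_2\,d\tau+Ms\sqrt{T}\,\|\Delta u(\cdot)\|_{L^2[0,T]}. \]
Gr\"onwall's lemma and $\|u(\cdot)\|_2\equiv 1$ yield $\|\gamma_s(t)-\gamma_0(t)\|_2\leq M e^{C T}\sqrt{T}\,\|\Delta u(\cdot)\|_{L^2[0,T]}$ for some $C$. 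Choosing $\delta$ so that $M e^{CT}\sqrt{T}\,\delta$ is strictly smaller than the distance from $\gamma([0,T])$ to $\pa K$ forces the homotopy to stay inside $\Omega$ for every $s\in[0,1]$ as long as $\|\Delta u(\cdot)\|_{L^2[0,T]}\leq \delta$.

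\emph{Step 2 (estimate on $\Delta b$).} Subtracting equation \eqref{eqn:b_dot} at parameters $s$ and $0$,
\begin{align*}
\pa_t\Delta b(t) = {}& \pa_q\f_{u(t)}\big|_{\gamma_0(t)}\Delta b(t) + \bigl[\f_{\Delta u(t)}(\gamma_s(t))-\f_{\Delta u(t)}(\gamma_0(t))\bigr] \\
& + \Bigl[\pa_q\f_{u(t)+s\Delta u(t)}\big|_{\gamma_s(t)}-\pa_q\f_{u(t)}\big|_{\gamma_0(t)}\Bigr]b_s(t),
\end{align*}
with $\Delta b(0)=0$. The mean value theorem bounds the second bracket by $L\|\Delta u(t)\|_2\|\gamma_s(t)-\gamma_0(t)\|_2$, and splitting the third bracket into a control part and a position part bounds it by $M\bigl(s\|\Delta u(t)\|_2+\|\gamma_s(t)-\gamma_0(t)\|_2\bigr)\|b_s(t)\|_2$. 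The analogue of formula \eqref{eqn:b_0} for arbitrary $s$ gives $\|b_s(\tau)\|_2\leq Me^{CT}\sqrt{T}\,\|\Delta u(\cdot)\|_{L^2[0,T]}$, and Step~1 gives the same type of bound for $\|\gamma_s-\gamma_0\|_2$. Substituting and applying Gr\"onwall to the linear equation for $\Delta b$ leads to
\[ \|\Delta b(t)\|_2\leq \Phi(T)\int_0^T\Bigl(\|\Delta u(\tau)\|_2\cdot\sqrt{T}\,\|\Delta u(\cdot)\|_{L^2[0,T]}+\sqrt{T}\,\|\Delta u(\cdot)\|_{L^2[0,T]}^2\Bigr)d\tau \]
for a continuous $\Phi(T)$ absorbing the Gr\"onwall exponentials. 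Cauchy--Schwarz on the first summand turns it into $T\|\Delta u(\cdot)\|_{L^2[0,T]}^2$, and the energy hypothesis via Lemma~\ref{lem:basic_estimate} ($\|\Delta u(\cdot)\|_{L^2[0,T]}\leq 2\sqrt T$) converts the leftover factor on the second summand into another power of $\sqrt T$, yielding the desired $T\xi(T)\|\Delta u(\cdot)\|_{L^2[0,T]}^2$.

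\emph{Restriction invariance and main obstacle.} All constants $L$, $M$, $C$ depend only on the fields $X_i$ on $\Omega$, and $\|u(\cdot)\|_\infty$ can only decrease under restriction to $[a,b]\subset [0,T]$; the exponential $e^{CT}$ is monotone in $T$; hence the same $\delta$ and $\xi$ survive restriction. The main technical obstacle is bookkeeping: to land on the prefactor $T$ (and not merely $\sqrt{T}$) multiplying $\|\Delta u(\cdot)\|_{L^2[0,T]}^2$, one must carefully distribute the available $\sqrt{T}$ factors (coming from Cauchy--Schwarz and from the energy bound) across the cubic and higher-order cross terms produced by the two nonlinear corrections in the ODE for $\Delta b$. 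This is precisely the role of the auxiliary estimates in Lemmata~\ref{lem:estimate_delta_q}--\ref{lem:estimate_b}, which the appendix organizes as a chain of Gr\"onwall estimates reflecting exactly the scheme above.
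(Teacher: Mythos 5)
Your proposal follows essentially the same route as the paper: a chain of Gr\"onwall estimates, first bounding $\|\gamma_s(t)-\gamma_0(t)\|_2$ (yielding $\delta$ via a first-exit-time argument), then bounding $\|b_s(t)\|_2$, and finally bounding $\|b_s(t)-b_0(t)\|_2$ by differentiating \eqref{eqn:b_dot} in $s$ and dominating the resulting linear ODE, with Lemma~\ref{lem:basic_estimate} used to absorb the $\Delta u$-dependence into continuous functions of $T$. One small bookkeeping caution: in your Step~1 the Gr\"onwall coefficient $1+\|u(\tau)+s\Delta u(\tau)\|_2$ leaks $\|\Delta u\|_{L^2}$ into the exponential, which then requires an a priori bound on $\|\Delta u\|_{L^2}$ before $\delta$ is defined; the paper sidesteps this by splitting the increment as $\bigl[\f_{u}(\gamma_s)-\f_{u}(\gamma_0)\bigr]+s\f_{\Delta u}(\gamma_s)$, which keeps the Lipschitz coefficient equal to $\|u(t)\|_1\le\sqrt k$ and hence $\Delta u$-independent.
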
 
The proof is elementary (it uses only Gr\"{o}nwall's inequality and the mean value theorem), yet rather long (several estimations need to be done). To improve the clarity of the exposition we move it to Appendix \ref{app:technical}.  

Now to prove Theorem \ref{thm:main} it is enough to put together estimates obtained in Lemmata \ref{lem:basic_estimate}, \ref{lem:b_0_nsre}, and~\ref{lem:estimate_delta_b}. 

\paragraph{Proof of Theorem \ref{thm:main}.}
Let $\gamma:[0,T]\ra \Omega\subset\R^n$ be a NSRE lying in a relatively compact and convex open set $\Omega$. Let  $c$, $\delta$ and $\xi(T)$ be as in Lemmata \ref{lem:b_0_nsre} and \ref{lem:estimate_delta_b}. We choose a number $\eps>0$ satisfying the conditions
\begin{equation}
\label{eqn:cond_eps}
2\sqrt{\eps}<\delta\quad\text{and}\quad \sup_{T\in[0,\eps]}T \xi(T)<\frac c2\ .
\end{equation}
We claim that any restriction of $\gamma(\cdot)$ to an interval $[a,b]\subset[0,T]$ of length smaller than $\eps$ is a minimizing SR geodesic. 

Without any loss of generality we may assume that $[a,b]=[0,T']\subset[0,T]$ with $T'<\eps$ (we can always reparametrize $\gamma\big|_{[a,b]}(\cdot)$ by $t\mapsto t-a$). Let us now choose any trajectory $\wt\gamma:[0,T']\ra \R^n$ sharing the same initial point as $\gamma(\cdot)$ and such that $\E(\wt\gamma(\cdot))<\E(\gamma\big|_{[0,T']}(\cdot))$. We denote the control corresponding to $\wt\gamma(\cdot)$ by $u(t)+\Delta u(t)$ and build the natural homotopy $\gamma_s(\cdot)$ joining $\gamma\big|_{[0,T']}(\cdot)$ with $\wt\gamma(\cdot)$ as described in this section. By Lemma \ref{lem:basic_estimate} we know that $\|\Delta u(\cdot)\|_{L^2[0,T']}\leq 2\sqrt{T'}$. Since $2\sqrt{T'}<2\sqrt{\eps}\overset{\eqref{eqn:cond_eps}}<\delta$, Lemma \ref{lem:estimate_delta_b} implies that $\wt\gamma(\cdot)$ and the whole related homotopy $\gamma_s(\cdot)$ lie inside $\Omega$. Moreover, the difference $b_s(T')-b_0(T')$ can be estimated by \eqref{eqn:estimate_delta_b}. Finally, by the results of Lemmata \ref{lem:basic_estimate} and \ref{lem:b_0_nsre}, we estimate:
\begin{equation}
\|b_0(T')\|_2\overset{\eqref{eqn:b_0_estimate}}\geq c\cdot\big|\int_0^{T'}\phi(\tau)\dd \tau\big|\overset{\eqref{eqn:basic_estimate}}\geq\frac c2\cdot\|\Delta u(\cdot)\|^2_{L^2[0,T']}\ .
\label{eqn:estimate_b_0}
\end{equation}
In consequence we get
\begin{align*}
\|\wt\gamma(T')-\gamma(T')\|_2&=\|\gamma_1(T')-\gamma_0(T')\|_2=\big\|\int_0^1\pa_s\gamma_s(T')\dd s\big\|_2=\big\|\int_0^1b_s(T')\dd s\big\|_2\\
&=\big\|\int_0^1b_s(T')-b_0(T')+b_0(T')\dd s\big\|_2\\
&\geq\big\|\int_0^1 b_0(T')\dd s\big\|_2-\big\|\int_0^1b_s(T')-b_0(T')\dd s\big\|_2\\
&\geq\|b_0(T')\|_2-\int_0^1\|b_s(T')-b_0(T')\|_2\dd s \\
&\overset{\text{\eqref{eqn:estimate_b_0} and \eqref{eqn:estimate_delta_b}}}\geq\left(\frac c2-T'\xi(T')\right)\|\Delta u(\cdot)\|^2_{L^2[0,T']}\ .
\end{align*}
Due to \eqref{eqn:cond_eps} we conclude that if $\Delta u$ does not satisfy $\Delta u(\cdot)= 0$ a.e., then $\|\wt\gamma(T')-\gamma(T')\|_2>0$, i.e., no trajectory with the energy smaller then the energy of $\gamma\big|_{[0,T']}(\cdot)$ can have the same end-points as $\gamma\big|_{[0,T']}(\cdot)$. This ends the proof. \qed

\appendix
\section{Technical results}\label{app:technical}

In this part we formulate various technical results needed in the course of this work.

\paragraph{Flows of control vector fields.}

\begin{lemma}\label{lem:flows}
Within the notation from Section \ref{sec:nsre}, let $\gamma:[0,T]\ra Q$ be a trajectory of a control system $f:Q\times\R^k\ra \T Q$ corresponding to a given locally bonded measurable control $u:[0,T]\ra\R^k$ and let $\F_{t\tau}(\cdot)$ be the TD-flow of the related time-dependent vector field $\f_{u(t)}$. Then $\F_{t\tau}(\cdot)$ is a well-defined family of local diffeomorphisms in a neighborhood of $\gamma(\cdot)$. Moreover, for any fixed $\tau\in[0,T]$, we have:
\begin{enumerate}[(i)]
	\item The assignment $t\mapsto \F_{t\tau}(q)$ is ACB whenever defined.
	\item The assignment $t\mapsto \T \F_{t\tau}(\cdot):\T_qQ\ra \T_{\F_{t\tau}(q)}Q$ is ACB whenever $\F_{t\tau}(q)$ is defined. 
\end{enumerate}  
\end{lemma}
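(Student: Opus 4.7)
The plan is to reduce the manifold problem to a parameter-dependent ODE on $\R^n$ via a finite coordinate atlas covering a relatively compact neighborhood $U$ of $\gamma([0,T])$, and then invoke the standard Caratheodory theory of ODEs. In coordinates on any chart one has $\pa_t x = \f_{u(t)}(x) = \sum_i u^i(t) X_i(x)$, where each $X_i$ is smooth in $x$ and $u^i(\cdot)$ is locally bounded measurable in $t$; since $[0,T]$ is compact we may take $u(\cdot)$ to be essentially bounded. Hence the right-hand side is Caratheodory (measurable in $t$, smooth in $x$) and, together with all its $x$-derivatives, essentially bounded on any compact subset of $U$.

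I would first establish that $\F_{t\tau}$ is a well-defined family of local diffeomorphisms on a tubular neighborhood of $\gamma$. Existence and uniqueness of a Caratheodory solution through any initial point $(q,\tau)$ on a small time interval follow from the classical Caratheodory existence theorem combined with local Lipschitz continuity of $\f_{u(t)}$ in $x$, uniform in $t$. Since $\gamma([0,T])$ is relatively compact, a standard continuation plus tubular-neighborhood argument yields existence of $\F_{t\tau}(q)$ on an open neighborhood of the graph of $\gamma$. The semigroup identity $\F_{t\sigma}\circ\F_{\sigma\tau}=\F_{t\tau}$ (with $\F_{\tau\tau}=\Id$) then gives invertibility with inverse $\F_{\tau t}$, while $C^1$-dependence on initial conditions --- a classical theorem on differentiable dependence of Caratheodory solutions on parameters, see e.g.\ Bressan--Piccoli --- upgrades the inverse to a $C^1$ map, so $\F_{t\tau}$ is indeed a local diffeomorphism.

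For (i) the ACB statement is then immediate: by construction $\pa_t \F_{t\tau}(q) = \f_{u(t)}(\F_{t\tau}(q))$ a.e., and as long as the trajectory stays in a fixed compact subset of $U$ the right-hand side is essentially bounded, so $t\mapsto \F_{t\tau}(q)$ is absolutely continuous with essentially bounded derivative. For (ii) I would differentiate the flow ODE formally with respect to $q$ to obtain the variational equation $\pa_t \T\F_{t\tau}(q) = \pa_q \f_{u(t)}|_{\F_{t\tau}(q)}\cdot \T\F_{t\tau}(q)$, with initial condition $\T\F_{\tau\tau}(q)=\Id$. The justification that $\T\F_{t\tau}(q)$ really solves this equation is, once again, the $C^1$-dependence-on-parameters result. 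Granting it, the linear coefficient $\pa_q \f_{u(t)}$ is essentially bounded (continuous in $x$ on compacts, with essentially bounded $u^i(t)$), so $\T\F_{t\tau}(q)$ is absolutely continuous with essentially bounded derivative, i.e.\ ACB; boundedness of the solution itself follows from a Gr\"{o}nwall estimate.

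The only nontrivial ingredient is the classical theorem on smooth dependence of Caratheodory solutions on initial conditions, which requires a careful passage to the limit in the difference quotient $(\F_{t\tau}(q+h)-\F_{t\tau}(q))/h$ using the dominated convergence theorem inside a Gr\"{o}nwall bound; this is where the uniform-in-$t$ smoothness of $\f_{u(t)}$ in $x$ enters crucially. Beyond that, the lemma reduces to the elementary fact that solutions of $\pa_t Y = A(t) Y + B(t)$ with essentially bounded $A,B$ are automatically absolutely continuous with essentially bounded derivatives, and the rest is bookkeeping between the coordinate charts composing the tubular neighborhood of $\gamma$.
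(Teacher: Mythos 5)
The paper does not give its own proof of Lemma~\ref{lem:flows}; it states the result and refers the reader to Lemma~2.1 and the Appendix of \cite{MJ_WR_contact_pmp}. Your argument is a correct self-contained proof along the standard lines one would expect to find there: pass to coordinates, observe that $\f_{u(t)}$ is a Carath\'eodory right-hand side that is smooth in $x$ and essentially bounded (together with all $x$-derivatives) on compacts uniformly in $t$, obtain local existence, uniqueness, and $C^1$-dependence on initial data from the Carath\'eodory theory (e.g.\ Bressan--Piccoli), glue along the compact image of $\gamma$, use the two-parameter cocycle identity for invertibility, and read off both ACB properties directly from the flow ODE and its variational equation together with a Gr\"onwall bound. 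The one place where your write-up is slightly informal is the ``tubular neighborhood'' step: to get a \emph{single} neighborhood of $\gamma(\cdot)$ on which $\F_{t\tau}$ is defined for all $t,\tau\in[0,T]$ one should make the compactness argument explicit (cover $[0,T]$ by finitely many intervals on which the flow is defined on a uniform ball around $\gamma$, then compose), but this is routine and does not affect the correctness of the argument.
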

An analogous result is valid also in a more general context of Caratheodory vector fields. The reader may consult Sec. 2 and Appendix of \cite{MJ_WR_contact_pmp} for a detailed discussion. 

\paragraph{Gr\"{o}nwall's inequality.}
Now we will formulate a version of the well-known \emph{Gr\"{o}nwall's inequality} (see e.g., \cite{Evans}), which is extensively used in various estimations needed in this paper. 

\begin{lemma}[Gr\"onwall's inequality]\label{lem:gronwall}
Let $\alpha,\beta:\R\ra\R$ be non-negative locally bounded measurable maps and let $x:[0,T]\ra\R^n$ be an absolutely continuous curve in $\R^n$ such that $x(0)=0$ and
\begin{equation}
\label{eqn:norm_assumptions}
\|\dot x(t)\|_2\leq \alpha(t)+\beta(t)\|x(t)\|_2 \quad \text{for $t\in[0,T]$  a.e..}
\end{equation} 
Then
\begin{equation}
\label{eqn:norm_estimate}
\|x(t)\|_2\leq e^{\int_0^t\beta(\tau)\dd\tau}\cdot\int_0^t\alpha(\tau)\dd\tau \quad\text{for every $t\in[0,T]$.}
\end{equation}
\end{lemma}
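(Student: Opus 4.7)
The plan is to reduce the pointwise differential inequality \eqref{eqn:norm_assumptions} to an integral inequality on the scalar quantity $y(t):=\|x(t)\|_2$, and then exploit a standard integrating-factor argument. Set
$$A(t):=\int_0^t\alpha(\tau)\dd\tau \quad \text{and}\quad B(t):=\int_0^t\beta(\tau)\dd\tau\,.$$
Since $x$ is absolutely continuous with $x(0)=0$, I have $x(t)=\int_0^t\dot x(\tau)\dd\tau$, so the triangle inequality for vector-valued integrals combined with \eqref{eqn:norm_assumptions} yields
$$y(t)\leq\int_0^t\|\dot x(\tau)\|_2\dd\tau\leq A(t)+\int_0^t\beta(\tau)y(\tau)\dd\tau\,.$$

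Next I would introduce the dominating function
$$\Psi(t):=A(t)+\int_0^t\beta(\tau)y(\tau)\dd\tau\,,$$
which is absolutely continuous, satisfies $\Psi(0)=0$ and $y(t)\leq\Psi(t)$, and whose derivative (existing a.e.) obeys $\dot\Psi(t)=\alpha(t)+\beta(t)y(t)\leq \alpha(t)+\beta(t)\Psi(t)$. Multiplying by the integrating factor $e^{-B(t)}$ and applying the product rule for absolutely continuous functions then gives
$$\frac{\dd}{\dd t}\bigl[e^{-B(t)}\Psi(t)\bigr]=e^{-B(t)}\bigl(\dot\Psi(t)-\beta(t)\Psi(t)\bigr)\leq e^{-B(t)}\alpha(t)\quad\text{a.e.}$$
Integrating from $0$ to $t$ and using $e^{-B(s)}\leq 1$ (which holds because $\beta\geq 0$) produces $e^{-B(t)}\Psi(t)\leq A(t)$, hence
$$y(t)\leq\Psi(t)\leq e^{B(t)}A(t)\,,$$
which is precisely \eqref{eqn:norm_estimate}.

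The only delicate point is to make sure that the manipulations above are valid under mere absolute continuity and measurability rather than smoothness. In particular, the product rule $\frac{\dd}{\dd t}(fg)=\dot f g+f\dot g$ a.e. requires both factors to be absolutely continuous, which is fine here since $B$, $\Psi$, and hence $e^{-B}$ all are, and measurability of $\beta$ together with essential boundedness of $y$ on compact subintervals guarantees that the integrals defining $\Psi$ make sense. Beyond this bookkeeping no real difficulty is expected, as the remainder of the argument is elementary integration.
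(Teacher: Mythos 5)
Your argument is correct. Note that the paper itself does not prove Lemma \ref{lem:gronwall}; it only cites a standard reference (Evans), so there is no in-text proof to compare against. Your proof is the standard integrating-factor argument: pass from the differential inequality on $\|x\|_2$ to the integral inequality $y(t)\le A(t)+\int_0^t\beta y$, introduce the dominating absolutely continuous function $\Psi$, and use the product/chain rule (valid a.e.\ for absolutely continuous functions, since $B$ is Lipschitz on $[0,T]$, $e^{-(\cdot)}$ is $C^1$, and a product of bounded AC functions on a compact interval is AC) together with $e^{-B(s)}\le 1$ to get $e^{-B(t)}\Psi(t)\le A(t)$. All the measure-theoretic bookkeeping you flag is handled by the facts that $x$ continuous on $[0,T]$ makes $y$ bounded and that the integrands are locally bounded measurable, so the integrals and the fundamental theorem of calculus apply. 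The proposal is a complete and correct proof of the stated version of Gr\"onwall's inequality.
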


\paragraph{A proof of Lemma \ref{lem:estimate_delta_b}.}
We shall end this technical part by providing a proof of Lemma \ref{lem:estimate_delta_b}. The reasoning is elementary, yet quite long, so we shall divide it into parts. 

Let $\gamma:[0,T]\ra \Omega\subset\R^n$ be the trajectory of the control system $(q,u)\mapsto f(q,u)=\sum_iu^iX_i(q)$, corresponding to a normalized control $u(t)$ and contained in a given relatively compact and convex open set $\Omega$. Consider another trajectory $\wt\gamma:[0,T]\ra \R^n$ corresponding to the control $u(t)+\Delta u(t)$ and denote by $\gamma_s:[0,T]\ra \R^n$ for $s\in[0,1]$ the natural homotopy related with $\Delta u(t)$. Note that since $\|u(t)\|_2=1$ we have 
\begin{equation}
\label{eqn:norms} 
\|u(t)\|_1\leq \sqrt{k}\cdot\|u(t)\|_2=\sqrt{k}\ .
\end{equation} 

Let $(q^1,\hdots,q^n)$ be coordinates in $\R^n$. Choose finite non-negative numbers $C_0$, $C_1$, $C_2$, and $C_3$ such that 
\begin{equation}
\label{eqn:estimates}
\begin{split}
&\|X_i(q)\|_2\leq C_0,\quad\|\frac{\pa X_i}{\pa q^a}\Big|_q \|_2\leq C_1,\quad\\
 &\|X_i(q)-X_i(q')\|_2\leq C_2\|q-q'\|_2,\quad\text{and}\quad\|\frac{\pa X_i}{\pa q^a}\Big|_{q'}-\frac{\pa X_i}{\pa q^a}\Big|_q\|_2\leq C_3\|q'-q\|_2
\end{split}
\end{equation} 
for every $q,q'\in \overline\Omega$, $i=1,\hdots, k$ and $a=1,\hdots,n$. If the $\|\cdot\|_2$-norm in \eqref{eqn:estimates} is repalced by the $\|\cdot\|_1$-norm then the existence of $C_0$ and $C_1$ is clear, as the vector fields $X_i$ are smooth and $\overline\Omega$ is compact. Similarly, the existence of $C_2$ and $C_3$ follows easily from the mean value theorem (note that $\Omega$ is convex), smoothness of $X_i$ and compactness of $\overline \Omega$. Since all norms in an Euclidean space are equivalent, estimates \eqref{eqn:estimates} (with modified constants) hold also for $\|\cdot\|_2$-norm.

At first we shall estimate the norm of the difference $\gamma_s(t)-\gamma_0(t)$ which will allow us to prove the first part of the assertion of Lemma \ref{lem:estimate_delta_b}. 
\begin{lemma}[Estimate of $\gamma_s(t)-\gamma_0(t)$]\label{lem:estimate_delta_q}
Assuming that the whole homotopy $\gamma_s(\cdot)$ lies in $\Omega$, there exists a continuous function $\zeta:\R\ra\R_+$ such that for every $t\in[0,T]$ and every $s\in[0,1]$ the following inequality holds
\begin{equation}
\label{eqn:estimate_delta_q}
\lVert \gamma_s(t)-\gamma_0(t)\rVert_2\leq \sqrt{T}\zeta(T)\|\Delta u(\cdot)\|_{L^2[0,T]}\ .
\end{equation}
\end{lemma}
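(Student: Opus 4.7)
The plan is to set $x(t):=\gamma_s(t)-\gamma_0(t)$, derive a linear differential inequality satisfied by $\|x(t)\|_2$, and then apply Gr\"onwall's inequality (Lemma \ref{lem:gronwall}). Both $\gamma_s(\cdot)$ and $\gamma_0(\cdot)$ satisfy Caratheodory ODEs with the same initial condition $q_0$, so $x(0)=0$ and, a.e.,
\begin{equation*}
\dot x(t)=\sum_i u^i(t)\bigl[X_i(\gamma_s(t))-X_i(\gamma_0(t))\bigr]+s\sum_i\Delta u^i(t)\,X_i(\gamma_s(t))\,.
\end{equation*}
Since, by hypothesis, both curves lie inside the relatively compact convex set $\Omega$, the estimates \eqref{eqn:estimates} apply on $\overline\Omega$, yielding (after switching the $\|\cdot\|_1$ and $\|\cdot\|_2$ norms via equivalence and using $s\in[0,1]$) the bound
\begin{equation*}
\|\dot x(t)\|_2\leq C_2\|u(t)\|_1\cdot\|x(t)\|_2+C_0\|\Delta u(t)\|_1\,.
\end{equation*}

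Using the normalization $\|u(t)\|_2\equiv 1$ together with \eqref{eqn:norms} gives $\|u(t)\|_1\leq\sqrt k$, so we may take $\alpha(t):=C_0\|\Delta u(t)\|_1$ and the constant $\beta(t):=\sqrt k\,C_2$ in Lemma \ref{lem:gronwall}. Both are non-negative and (locally) bounded measurable, and the hypothesis \eqref{eqn:norm_assumptions} is satisfied, so
\begin{equation*}
\|x(t)\|_2\leq e^{\sqrt k\,C_2\,t}\int_0^t C_0\|\Delta u(\tau)\|_1\,\dd\tau\leq C_0\,e^{\sqrt k\,C_2\,T}\int_0^T\|\Delta u(\tau)\|_1\,\dd\tau\,.
\end{equation*}
To pass from an $L^1$-type bound to the $L^2$-norm of $\Delta u(\cdot)$, I would use $\|\Delta u(\tau)\|_1\leq\sqrt k\,\|\Delta u(\tau)\|_2$ and then the Cauchy--Schwarz inequality in the form $\int_0^T\|\Delta u(\tau)\|_2\,\dd\tau\leq\sqrt T\,\|\Delta u(\cdot)\|_{L^2[0,T]}$.

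Combining everything produces
\begin{equation*}
\|\gamma_s(t)-\gamma_0(t)\|_2\leq \sqrt T\cdot\bigl(C_0\sqrt k\,e^{\sqrt k\,C_2\,T}\bigr)\cdot\|\Delta u(\cdot)\|_{L^2[0,T]}\,,
\end{equation*}
so that setting $\zeta(T):=C_0\sqrt k\,e^{\sqrt k\,C_2\,T}$, which is manifestly continuous in $T$, gives the desired inequality \eqref{eqn:estimate_delta_q}. The reasoning is genuinely routine; no serious obstacle arises as long as the homotopy remains inside $\Omega$ so that the Lipschitz constants $C_0$ and $C_2$ are valid. The only mild point of care is keeping track of the equivalence of norms $\|\cdot\|_1$ and $\|\cdot\|_2$ in $\R^k$ (introducing the factor $\sqrt k$) and ensuring that $\zeta$ depends only on $\Omega$ and the fields $X_i$, not on $s$ or on the particular perturbation $\Delta u(\cdot)$, which is automatic from the construction.
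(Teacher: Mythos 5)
Your proof is correct and follows essentially the same route as the paper's: write the Caratheodory ODE for $\gamma_s(\cdot)-\gamma_0(\cdot)$, bound its right-hand side by the Lipschitz and sup constants $C_0,C_2$ on $\overline\Omega$ together with the norm comparison $\|\cdot\|_1\leq\sqrt k\,\|\cdot\|_2$ in $\R^k$, apply Gr\"onwall's inequality, and finish with Cauchy--Schwarz to trade the $L^1$-in-time integral of $\|\Delta u\|_2$ for $\sqrt{T}\,\|\Delta u(\cdot)\|_{L^2[0,T]}$. Even your final choice $\zeta(T)=C_0\sqrt{k}\,e^{\sqrt{k}C_2 T}$ coincides with the paper's.
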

\begin{proof}
By definition, for every $s\in[0,1]$ we have 
$$\dot \gamma_s(t)=\f_{u(t)+s\Delta u(t)}(\gamma_s(t))=\sum_i \left[u^i(t)+s\Delta u^i(t)\right]X_i(\gamma_s(t))\ .$$
Thus 
$$
\dot \gamma_s(t)-\dot \gamma_0(t)=\sum_i u^i(t)\left[X_i(\gamma_s(t))-X_i(\gamma_0(t))\right]+ s\sum_i \Delta u^i(t)X_i(\gamma_s(t))\ .
$$
Since  $\gamma_s(\cdot)$ lies in $\Omega$ it follows that 
\begin{align*}
\|\dot \gamma_s(t)-\dot \gamma_0(t)\|_2\leq&\sum_i |u^i(t)|\cdot\|X_i(\gamma_s(t))-X_i(\gamma_0(t))\|_2+ s\sum_i |\Delta u^i(t)|\cdot\|X_i(\gamma_s(t))\|_2 \\
\overset{\eqref{eqn:estimates}}\leq&\|u(t)\|_1\cdot C_2\cdot\|\gamma_s(t)-\gamma_0(t)\|_2+ \|\Delta u(t)\|_1\cdot C_0\\
\overset{\eqref{eqn:norms}}\leq&\sqrt{k}C_2\cdot\|\gamma_s(t)-\gamma_0(t)\|_2+ \|\Delta u(t)\|_2\sqrt{k}C_0\ .
\end{align*}
Now we can apply Gr\"{o}nwall's inequality, as formulated in Lemma \ref{lem:gronwall} (note that $\gamma_s(0)-\gamma_0(0)=0$), to get the estimate
$$\|\gamma_s(t)-\gamma_0(t)\|_2\leq e^{\sqrt{k} C_2 t}\sqrt kC_0\int_0^t\|\Delta u(\tau)\|_2\dd \tau.$$
By Cauchy-Schwarz inequality and $t\leq T$, we conclude
\begin{equation}
\label{eqn:holder_u_L2}
\int_0^t\|\Delta u(\tau)\|_2\dd \tau\leq \left(\int_0^t\|\Delta u(\tau)\|^2_2\dd \tau\right)^{\frac 12}\cdot\left(\int_0^t 1\dd\tau\right)^{\frac 12}\overset{(t\leq T)}\leq \|\Delta u(\cdot)\|_{L^2[0,T]}\cdot\sqrt{T}\ ,
\end{equation}
and thus \eqref{eqn:estimate_delta_q} holds with $\zeta(T):=e^{\sqrt{k} C_2 T}\sqrt kC_0$. \end{proof}

As a simple corollary from the above reasoning we obtain the following result.
\begin{lemma}
There exists a number $\delta >0$ such that if $\|\Delta u(\cdot)\|_{L^2[0,T]}\leq \delta$, then the whole homotopy $\gamma_s(\cdot)$ lies in $\Omega$. 
\end{lemma}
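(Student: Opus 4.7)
The plan is to combine the estimate of Lemma~\ref{lem:estimate_delta_q} with a standard continuation argument. First I would observe that, since $\gamma_0(\cdot)=\gamma(\cdot)$ is continuous on the compact interval $[0,T]$ with image in the open set $\Omega$, the distance
$$d:=\operatorname{dist}\bigl(\gamma_0([0,T]),\,\R^n\setminus\Omega\bigr)$$
is strictly positive. I would then pick any $\delta>0$ satisfying $\sqrt{T}\,\zeta(T)\,\delta<d$ and claim that such a $\delta$ does the job.

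The subtle point is that Lemma~\ref{lem:estimate_delta_q} presupposes that the whole homotopy already lies in $\Omega$, which is essentially what we wish to prove, so a naive invocation would be circular. To break the circularity I would fix $s\in[0,1]$ and introduce the exit time
$$t^*(s):=\sup\{t\in[0,T]\,:\,\gamma_s(\tau)\in\overline\Omega\text{ for all }\tau\in[0,t]\}.$$
On the closed sub-interval $[0,t^*(s)]$ the curve $\gamma_s$ takes values in $\overline\Omega$, and since the bounds in~\eqref{eqn:estimates} were set up on the compact set $\overline\Omega$, the proof of Lemma~\ref{lem:estimate_delta_q} carries over verbatim on this sub-interval. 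Noting further that $\zeta(T)=e^{\sqrt{k}\,C_2 T}\sqrt{k}\,C_0$ is non-decreasing in $T$ and that $\|\Delta u(\cdot)\|_{L^2[0,t^*(s)]}\leq\|\Delta u(\cdot)\|_{L^2[0,T]}\leq\delta$, I would deduce
$$\|\gamma_s(t)-\gamma_0(t)\|_2\leq\sqrt{T}\,\zeta(T)\,\delta<d\qquad\text{for every }t\in[0,t^*(s)].$$

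This inequality places $\gamma_s([0,t^*(s)])$ inside the open $d$-tubular neighborhood of $\gamma_0([0,T])$, which by the choice of $d$ is contained in $\Omega$. If $t^*(s)<T$ were to hold, then by continuity of $\gamma_s$ together with the very definition of the supremum, the point $\gamma_s(t^*(s))$ would have to lie on $\partial\Omega$, contradicting the strict inequality just derived. Hence $t^*(s)=T$ and $\gamma_s([0,T])\subset\Omega$ for every $s\in[0,1]$, which is the claim. The main obstacle is really the apparent circularity noted above; once one is comfortable with the fact that the proof of Lemma~\ref{lem:estimate_delta_q} depends only on the bounds~\eqref{eqn:estimates} valid on $\overline\Omega$ and hence applies on the sub-interval $[0,t^*(s)]$ as well, the remainder is just the standard open-closed topology of $[0,T]$ applied to $t^*(s)$.
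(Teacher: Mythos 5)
Your proof is correct and follows essentially the same strategy as the paper's: choose $\delta$ small enough that the Gr\"onwall-type estimate from Lemma~\ref{lem:estimate_delta_q} keeps the homotopy inside a tubular neighborhood of $\gamma_0$ contained in $\Omega$, break the apparent circularity via an exit-time/continuation argument on the set of times where the curve still lies in $\overline\Omega$, and derive a contradiction at the supremum. The only cosmetic differences are that you run the argument for each $s$ individually (defining $t^*(s)$) whereas the paper takes one supremum over all $s\in[0,1]$ simultaneously, and that your invocation of the monotonicity of $\zeta$ is superfluous since the estimate in Lemma~\ref{lem:estimate_delta_q} is already stated uniformly in $t\in[0,T]$.
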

\begin{proof}Let $\eta$ be a number such that $\Omega_\eta:=\{q\in\R^n\ |\ \exists_{t\in[0,T]}\quad\|q-\gamma(t)\|_2<\eta \}$,  the set of points within the $\eta$-distance from $\gamma(\cdot)$, is entirely contained in $\Omega$. Clearly $\eta>0$ since the image of $\gamma(\cdot)$ is compact and contained in $\Omega$. We claim that, for $\zeta:\R\ra\R_+$ as in the assertion of Lemma \ref{lem:estimate_delta_q}, if 
$$\sqrt{T}\zeta(T)\|\Delta u(\cdot)\|_{L^2[0,T]}\leq \frac{\eta}2\ ,$$ 
then the whole homotopy $\gamma_s(\cdot)$ lies in $\Omega$, i.e., $\delta=\frac{\eta}{2\sqrt{T}\zeta(T)}$.

Indeed, assume the contrary and let $t_0\in[0,T]$ be the supremum of these $t\in[0,T]$ such that $\gamma_s(\tau)\in \Omega$ for every $\tau\in[0,t]$ and every $s\in[0,1]$. Clearly $t_0$ is well defined as $\gamma_s(0)=q_0\in\Omega$ for every $s\in[0,1]$. By the continuity argument $\gamma_s(t_0)\in \overline\Omega$ for every $s\in[0,1]$ and also $\gamma_{s_0}(t_0)\in \partial \Omega$ for some $s_0\in[0,1]$ as we assumed that the homotopy $\gamma_s(\cdot)$ is not entirely contained in $\Omega$. (Basically the number $t_0$ captures the first moment at which one of the curves $\gamma_s(t)$ exists the domain $\Omega$.) 

It follows from the proof of Lemma \ref{lem:estimate_delta_q} that the estimate \eqref{eqn:estimate_delta_q} holds for these $t$ and $s$ such that $\gamma_\sigma(\tau)\in \overline \Omega$ for every $\sigma\in[0,s] $ and every $\tau\in [0,t]$. In particular, 
$$\|\gamma_{s_0}(t_0)-\gamma_0(t_0)\|_2\leq\frac \eta 2\ ,$$ 
and thus $\gamma_{s_0}(t_0)\in \Omega_{\frac\eta 2}\subset \operatorname{int} \Omega$. This contradicts our previous assumption. \end{proof}

With similar methods that were used for proving Lemma \ref{lem:estimate_delta_q} one can estimate the norm of $b_s(t)$. 
\begin{lemma}[Estimate of $b_s(t)$]\label{lem:estimate_b}
Assuming that the homotopy $\gamma_s(\cdot)$ lies in $\Omega$ and that $\E(\wt\gamma(\cdot))\leq \E(\gamma(\cdot))$, there exists a continuous function $\psi:\R\ra\R_+$ such that for every $t\in[0,T]$ and $s\in[0,1]$ the following inequality holds
\begin{equation}
\label{eqn:estimate_b_s}
\|b_s(t)\|_2\leq \sqrt{T}\psi(T)\|\Delta u(\cdot)\|_{L^2[0,T]}\ .
\end{equation}
\end{lemma}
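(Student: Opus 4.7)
My plan is to mimic the proof of Lemma \ref{lem:estimate_delta_q}: starting from the variational equation \eqref{eqn:b_dot} satisfied by $b_s(t)$, I would bring the right-hand side into the form required by Gr\"onwall's inequality (Lemma \ref{lem:gronwall}) and invoke Lemma \ref{lem:basic_estimate} to keep all constants under control in terms of $T$ alone.

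Since the whole homotopy $\gamma_s(\cdot)$ lies in $\Omega$, the bounds \eqref{eqn:estimates} on the fields $X_i$ and their first derivatives apply along each $\gamma_s(\cdot)$. Taking the Euclidean norm in \eqref{eqn:b_dot}, using $s\leq 1$, the equivalence $\|\cdot\|_1 \leq \sqrt{k}\|\cdot\|_2$, and the normalization $\|u(t)\|_2=1$, I would derive an inequality of the form
$$\|\pa_t b_s(t)\|_2 \leq K_0\, \|\Delta u(t)\|_2 + K_1\bigl(1+\|\Delta u(t)\|_2\bigr)\|b_s(t)\|_2,$$
for constants $K_0, K_1$ depending only on $k, n, C_0, C_1$. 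Since $b_s(0)=0$, this puts us in position to apply Lemma \ref{lem:gronwall} with $\alpha(t)=K_0\|\Delta u(t)\|_2$ and $\beta(t)=K_1(1+\|\Delta u(t)\|_2)$.

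The key point is now to keep $\int_0^t\beta(\tau)\dd\tau$ bounded by a function of $T$ alone. This is where the hypothesis $\E(\wt\gamma(\cdot))\leq \E(\gamma(\cdot))$ enters: Lemma \ref{lem:basic_estimate} gives $\|\Delta u(\cdot)\|_{L^2[0,T]} \leq 2\sqrt T$, hence by Cauchy-Schwarz $\int_0^T\|\Delta u(\tau)\|_2 \dd\tau \leq 2T$, so $\int_0^t\beta(\tau)\dd\tau \leq 3K_1 T$. Meanwhile, the same Cauchy-Schwarz argument used in \eqref{eqn:holder_u_L2} yields $\int_0^t\alpha(\tau)\dd\tau \leq K_0\sqrt T\|\Delta u(\cdot)\|_{L^2[0,T]}$. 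Inserting both bounds into \eqref{eqn:norm_estimate} produces
$$\|b_s(t)\|_2 \leq e^{3K_1 T}\, K_0 \sqrt T\, \|\Delta u(\cdot)\|_{L^2[0,T]},$$
which is the desired inequality \eqref{eqn:estimate_b_s} with $\psi(T):=K_0\, e^{3K_1 T}$.

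I do not anticipate any serious obstacle: the argument is a direct variant of the proof of Lemma \ref{lem:estimate_delta_q}, the main subtlety being that the coefficient multiplying $b_s(t)$ now involves the perturbed control $u(t)+s\Delta u(t)$ rather than $u(t)$, and this is precisely why the energy hypothesis is needed, to absorb the extra $\|\Delta u\|$-contribution in $\int_0^t\beta(\tau)\dd\tau$ into a purely $T$-dependent constant.
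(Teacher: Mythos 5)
Your proposal is correct and follows essentially the same route as the paper's proof: both start from the variational ODE \eqref{eqn:b_dot}, derive a Gr\"onwall-type differential inequality with $\alpha(t)\sim\|\Delta u(t)\|_2$ and $\beta(t)\sim 1+\|\Delta u(t)\|_2$, invoke Lemma \ref{lem:basic_estimate} to bound $\int_0^t\|\Delta u(\tau)\|_2\dd\tau\leq 2T$ so that the exponential factor depends on $T$ alone, and use the Cauchy--Schwarz estimate \eqref{eqn:holder_u_L2} for the inhomogeneous term. Your constants $K_0$, $K_1$ correspond to the paper's $C_0\sqrt{k}$ and $\sqrt{k}\,n\,C_1$, yielding the same $\psi(T)$ up to labelling.
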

\begin{proof}
Note that by \eqref{eqn:b_dot}
$$
\dot b_s(t)=\sum_i\left[u^i(t)+s\Delta u^i(t)\right]\sum_a\frac{\pa X_i}{\pa q^a}\Big|_{\gamma_s(t)}(b_s)^a(t)+\sum_i\Delta u^i(t)X(\gamma_s(t))\ .
$$ 
Now we can estimate 
\begin{align*}
\|\dot b_s(t)\|_2\overset{\eqref{eqn:estimates}}\leq&\left(\|u(t)\|_1+\|\Delta u(t)\|_1\right)\sum_aC_1|(b_s)^a(t)|+\|\Delta u(t)\|_1\cdot C_0 \\
\overset{\eqref{eqn:norms}}\leq&\sqrt{k}\left(1+\|\Delta u(t)\|_2\right) n C_1\|b_s(t)\|_2+C_0\sqrt{k}\|\Delta u(t)\|_2\ .
\end{align*}
By Gr\"{o}nwall's inequality (Lemma \ref{lem:gronwall}) (note that $b_s(0)=\pa_s\gamma_s(0)=\pa_sq_0=0$), we get
\begin{align*}\|b_s(t)\|_2\leq &e^{C_1\sqrt{k}n (t+\int_0^t\|\Delta u(\tau)\|_2\dd\tau)}C_0\sqrt{k}\int_0^t\|\Delta u(\tau)\|_2\dd\tau\\
\overset{\eqref{eqn:holder_u_L2}}\leq& e^{C_1\sqrt{k}n (t+\int_0^t\|\Delta u(\tau)\|_2\dd\tau)}C_0\sqrt{k}\sqrt{T}\|\Delta u(\cdot)\|_{L^2[0,T]}\ .
\end{align*}
Now, as $\E(\wt\gamma(\cdot))\leq \E(\gamma(\cdot))$ we have $\int_0^t\|\Delta u(\tau)\|_2\dd\tau\overset{\eqref{eqn:holder_u_L2}}\leq\sqrt{T}\|\Delta u(\cdot)\|_{L^2[0,T]}\overset{\text{Lem. \ref{lem:basic_estimate}}}\leq 2T$. We conclude that the assertion holds with $\psi(T):=e^{3C_1\sqrt{k}n T}C_0\sqrt{k}$.
\end{proof}

Finally we may calculate an estimate of $\|b_s(t)-b_0(t)\|_2$. Using the formula for $\dot b_s(t)$ from the proof of the above lemma we get
\begin{align*}
\dot b_s(t)-\dot b_0(t)= &\sum_iu^i(t)\sum_a\left[\frac{\pa X_i}{\pa q^a}\Big|_{\gamma_s(t)}(b_s)^a(t)-\frac{\pa X_i}{\pa q^a}\Big|_{\gamma_0(t)}(b_0)^a(t)\right]\\
&+s\sum_i\Delta u^i(t)\sum_a\frac{\pa X_i}{\pa q^a}\Big|_{\gamma_s(t)}(b_s)^a(t)+\sum_i\Delta u^i(t)\left[X_i(\gamma_s(t))-X_i(\gamma_0(t))\right]\\
=&\sum_iu^i(t)\sum_a\frac{\pa X_i}{\pa q^a}\Big|_{\gamma_s(t)}\left[(b_s)^a(t)-(b_0)^a(t)\right]\\
&-\sum_iu^i(t)\sum_a\left[\frac{\pa X_i}{\pa q^a}\Big|_{\gamma_s(t)}-\frac{\pa X_i}{\pa q^a}\Big|_{\gamma_0(t)}\right](b_0)^a(t)\\
&+s\sum_i\Delta u^i(t)\sum_a\frac{\pa X_i}{\pa q^a}\Big|_{\gamma_s(t)}(b_s)^a(t)+\sum_i\Delta u^i(t)\left[X_i(\gamma_s(t))-X_i(\gamma_0(t))\right]
\end{align*} 
Now due to \eqref{eqn:estimates} we can estimate
\begin{align*}
\|\dot b_s(t)-\dot b_0(t)\|_2\leq&\|u(t)\|_1\cdot nC_1\|b_s(t)-b_0(t)\|_2+\|u(t)\|_1\cdot nC_3\|\gamma_s(t)-\gamma_0(t)\|_2\cdot\|b_0(t)\|_2\\
&+\|\Delta u(t)\|_1\cdot nC_1\|b_s(t)\|_2+\|\Delta u(t)\|\cdot C_2\|\gamma_s(t)-\gamma_0(t)\|_2
\end{align*}
By \eqref{eqn:norms} and the results of Lemmata \ref{lem:estimate_delta_q} and \ref{lem:estimate_b} we get
\begin{align*}
\|\dot b_s(t)-\dot b_0(t)\|_2\leq&\sqrt{k}nC_1\|b_s(t)-b_0(t)\|_2+\sqrt{k}nC_3T\zeta(T)\psi(T)\|\Delta u(\cdot)\|^2_{L^2[0,T]}\\
&+\sqrt{k}\|\Delta u(t)\|_2 nC_1\sqrt{T}\psi(T)\|\Delta u(\cdot)\|_{L^2[0,T]}+\sqrt{k}\|\Delta u(t)\|_2C_2\sqrt{T}\psi(T)\|\Delta u(\cdot)\|_{L^2[0,T]}\ .
\end{align*}
Thus Gr\"{o}nwall's inequality (Lemma \ref{lem:gronwall}) (again $b_s(0)-b_0(0)=0$) gives us the following estimate:
\begin{align*}
\|b_s(t)-b_0(t)\|_2\leq e^{\sqrt{k}nC_1 t}&\left[\sqrt{k}nC_3T\zeta(T)\psi(T)\|\Delta u(\cdot)\|^2_{L^2[0,T]}\int_0^t 1\dd\tau\right.\\
&+\left.\sqrt{k}\sqrt{T}(nC_1\psi(T)+C_2\zeta(T))\|\Delta u(\cdot)\|_{L^2[0,T]}\int_0^t\|\Delta u(\tau)\|_2\dd\tau\right]
\end{align*}
Again using estimate \eqref{eqn:holder_u_L2} we arrive at inequality \eqref{eqn:estimate_delta_b} with 
$$\xi (T):=e^{\sqrt{k}nC_1 T}\left[\sqrt{k}nC_3T\zeta(T)\psi(T)+\sqrt{k}(nC_1\psi(T)+C_2\zeta(T))\right].$$
This ends the proof of Lemma \ref{lem:estimate_delta_b}. \qed	
	
\bibliographystyle{amsalpha}
\bibliography{bibl}

\providecommand{\bysame}{\leavevmode\hbox to3em{\hrulefill}\thinspace}
\providecommand{\MR}{\relax\ifhmode\unskip\space\fi MR }
\providecommand{\MRhref}[2]{%
  \href{http://www.ams.org/mathscinet-getitem?mr=#1}{#2}
}
\providecommand{\href}[2]{#2}
\begin{thebibliography}{PMBG62}

\bibitem[ABB12]{Agrachev_Barilari_Boscain_2012}
A.~A. Agrachev, D.~Barilari, and U.~Boscain, \emph{Introduction to {R}iemannian
  and sub-{R}iemannian geometry}, Preprint SISSA \textbf{9} (2012), 1--331.

\bibitem[AOP97]{Alcheikh_Orro_Pelletier_1997}
M.~Alcheikh, P.~Orro, and F.~Pelletier, \emph{Characterizations of
  {H}amiltonian geodesics in sub-{R}iemannian geometry}, J. Dynam. Control
  Systems \textbf{3} (1997), 391--418.

\bibitem[AS04]{Agrachev_Sachkov_2004}
A.~A. Agrachev and Y.~L. Sachkov, \emph{Control {T}heory from the {G}eometric
  {V}iewpoint}, Encyclopaedia Math. Sci., vol.~87, Springer, 2004.

\bibitem[BCT07]{Bonnard_Caillau_Trelat_2007}
B.~Bonnard, J.-B. Caillau, and E.~Tr\'{e}lat, \emph{Second order optimality
  conditions in the smooth case and applications in optimal control}, ESAIM
  Control Optim. Calc. Var. \textbf{13} (2007), no.~2, 207--236.

\bibitem[BP04]{Bressan_Piccoli_2004}
A.~Bressan and B.~Piccoli, \emph{Introduction to the {M}athematical {T}heory of
  {C}ontrol}, AIMS {S}eries on {A}pplied {M}athematics, vol.~2, Springer, 2004.

\bibitem[Eva10]{Evans}
L.C. Evans, \emph{Partial {D}ifferential {E}quations}, Graduate studies in
  mathematics, American Mathematical Society, 2010.

\bibitem[JR16]{MJ_WR_contact_pmp}
M.~J\'{o}\'{z}wikowski and W.~Respondek, \emph{A contact covariant approach to
  optimal control with applications to sub-{R}iemannian geometry}, Math.
  Control Signals Systems \textbf{28} (2016), 1--47.

\bibitem[LS95]{Liu_Sussmann_1995}
W.~Liu and H.~J. Sussmann, \emph{Shortest paths for sub-{R}iemannian metrics on
  rank-two distributions}, Mem. Amer. Math. Soc., vol. 564, AMS, 1995.

\bibitem[Mon06]{montgomery2006tour}
R.~Montgomery, \emph{A {T}our of {S}ubriemannian {G}eometries, {T}heir
  {G}eodesics and {A}pplications}, Mathematical Surveys and Monographs, no.~91,
  American Mathematical Society, 2006.

\bibitem[PMBG62]{Pontr_Inn_math_theor_opt_proc_1962}
L.~S. Pontryagin, E.~F. Mishchenko, V.~G. Boltyanskii, and R.~V. Gamkrelidze,
  \emph{The {M}athematical {T}heory of {O}ptimal {P}rocesses}, Wiley, 1962.

\bibitem[Rif14]{Rifford_2014}
L.~Rifford, \emph{Sub-{R}iemannian {G}eometry and {O}ptimal {T}ransport},
  Springerbriefs in Mathematics, Springer, 2014.

\end{thebibliography}

\end{document}